\newtheorem{theorem}{Theorem}[subsection]
\newtheorem{proposition}{Propostion}[subsection]
\newtheorem{lemma}{Lemma}[subsection]
\theoremstyle{definition}
\newtheorem{definition}{Definition}[subsection]
\theoremstyle{remark}
\newcommand{\Spec}{{\mathrm{Spec}}}
\begin{document}

\title[Two desingularization of the Kontsevich's moduli space]{Comparison of two desingularization of the Kontsevich's moduli space of elliptic stable maps}
\author{Hyenho Lho}
\address{Department of Mathematics, Seoul National University, Seoul 151-747, Korea}
\email{nhh128@snu.ac.kr}

\begin{abstract}
It is known that the main component of the Kontsevich's moduli space of elliptic stable maps is singular. There are two different desingularizations. One is Vakil-Zinger's desingularization and the other is the moduli space of logarithmic stable maps. When the degree is less then or equal to $3$ and the target is $\mathbb{P}^{n}$, we show that the moduli space of logarithmic stable maps can be obtained by blowing up Vakil-Zinger's desingularization.
\end{abstract}

\maketitle

\section{Introduction}
\bigskip

The Kontsevich's moduli space of stable maps  $\mathbf{\overline{M}}_{g,k}(X,d)$ is a moduli space which parametrizes maps from k-marked nodal curve of arithmetic genus $g$ to projective variety $X$ satisfying stability conditions. See \cite{FP} for precise definitions and properties. In this paper we only consider Kontsevich's moduli space of elliptic stable maps $\mathbf{\overline{M}}_{1,0}(\mathbb{P}^{n},d)$. $\mathbf{\overline{M}}_{1,0}(\mathbb{P}^{n},d)$ is known to have several components. We call the component parametrizing elliptic stable maps whose domain curve have non-contracted elliptic subcurves the main component. We denote the main component of $\mathbf{\overline{M}}_{1,0}(\mathbb{P}^{n},d)$ as $\mathbf{\overline{M}}_{1,0}(\mathbb{P}^{n},d)_{0}$. It is known that $\mathbf{\overline{M}}_{1,0}(\mathbb{P}^{n},d)_{0}$ is singular.

 Recently many birational model of $\mathbf{\overline{M}}_{1,0}(\mathbb{P}^{n},d)_{0}$ have been introduced by many authors.
 In \cite{VZ}, Vakil and Zinger found a canonical desingularization $\mathbf{\widetilde{M}}_{1,0}(\mathbb{P}^{n},d)_{0}$ of $\mathbf{\overline{M}}_{1,0}(\mathbb{P}^{n},d)_{0}$ by blowing-up $\mathbf{\overline{M}}_{1,0}(\mathbb{P}^{n},d)_{0}$. 
In \cite{Kim}, Kim introduced another desingularization of $\mathbf{\overline{M}}_{1,0}(\mathbb{P}^{n},d)_{0}$ called the moduli space of logarithmic stable maps by using log structures. We denote this space as $\overline{\mathbf{M}}_{1,0}^{log,ch}(\mathbb{P}^{n},d)$. In \cite{MOP}, Marian, Oprea and Pandharipande constructed moduli space of stable quotients denoted by $\mathbf{Q}_{g}(\mathbb{P}^{n},d)$. They defined a moduli space of stable quotients of the rank $n$ trivial sheaf on nodal curves. They also proved that when the genus is 1, $\mathbf{Q}_{1}(\mathbb{P}^{n},d)$ is a smooth Delign-Mumford stack. So this gives another smooth birational model.
In \cite{Vis}, Viscardi constructed a moduli space of ($m$)-stable maps denoted by $\overline{\mathbf{M}}_{1,k}^{(m)}(\mathbb{P}^{n},d)$. He defined a moduli space using ($m$)-stable curves which was introduced by Smyth \cite{S}. He also proved  $\overline{\mathbf{M}}_{1,k}^{(m)}(\mathbb{P}^{n},d)$ is smooth if $d+k \leq m \leq 5$.  

In general, it is not known how these birational moduli spaces are related to each others.
In this paper, we compare Vakil-Zinger's desingularization and the moduli space of logarithmic stable mpas. We show that $\overline{\mathbf{M}}_{1,0}^{log,ch}(\mathbb{P}^{n},3)$ can be obtained by blowing up $\mathbf{\widetilde{M}}_{1,0}(\mathbf{P}^{n},3)_{0}$ along the locus $\sum_{2}$, $\Gamma_{2}$, $\sum_{1}$, $\Gamma_{1}$.

${\sum_{1}}$ is the closure of the locus of $\mathbf{\overline{M}}_{1,0}(\mathbb{P}^{n},d)_{0}$ parametrizing stable maps such that their domain curves consist of a elliptic component of the degree $0$ and a rational component of the degree $3$ and the morphism restricted to the rational component has ramification order $3$ at the nodal point.

${\sum_{2}}$ is the closure of the locus of $\mathbf{\overline{M}}_{1,0}(\mathbb{P}^{n},d)_{0}$ parametrizing stable maps such that their domain curves consist of a elliptic component of the degree $0$, and two rational components with the degree $1$,$2$, each meeting the elliptic component at one point and the morphism restricted to degree $2$ rational component has ramification order $2$ at the nodal point.

$\Gamma_{1}$ is the closure of the locus of $\mathbf{\overline{M}}_{1,0}(\mathbb{P}^{n},d)_{0}$ parametrizing stable maps such that their domain curves consist of a elliptic component of the degree $0$ and a rational component of the degree three, and there exists a smooth point $q$ on the rational component such that $p$, $q$ go to same point, where $p$ is the node point.

$\Gamma_{2}$ is the closure of the locus of $\mathbf{\overline{M}}_{1,0}(\mathbb{P}^{n},d)_{0}$ parametrizing stable maps such that their domain curves consist of a elliptic component of the degree $0$ and two rational components with the degree $1$, $2$, each meeting the elliptic component at one point and there exists smooth point $q$ on degree $2$ rational component such that $p$, $q$ go to same point where $p$ is nodal point on degree $2$ rational component.

The outline of this paper is as follows.
In section 2, we give some preliminaries.
In section 3, we present an example of a degeneration where a nontrivial elliptic logarithmic stable map occurs.
In section 4, we calculate the fiber of the natural morphism from the moduli space of admissible stable maps to the Kontsevich's moduli space of stable maps.
In section 5, we prove two moduli spaces are equal if the degree is $2$. 
In section 6, we describe etale charts of $\mathbf{\overline{M}}_{1,0}(\mathbb{P}^{n},3)_{0}$ explicitly and by blowing up suitable subschemes, we obtain
\begin{theorem}
$\overline{\mathbf{M}}_{1,0}^{log,ch}(\mathbb{P}^{n},3)$ can be obtained by blowing-up $\mathbf{\widetilde{M}}_{1,0}(\mathbb{P}^{n},3)_{0}$ along the locus $\sum_{2}$, $\Gamma_{2}$, $\sum_{1}$, $\Gamma_{1}$.
\end{theorem}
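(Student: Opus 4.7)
The plan is to compare the two moduli spaces \'etale-locally on $\mathbf{\widetilde{M}}_{1,0}(\mathbb{P}^{n},3)_{0}$. The first step is to produce a canonical birational morphism
\[
\pi : \overline{\mathbf{M}}_{1,0}^{log,ch}(\mathbb{P}^{n},3) \longrightarrow \mathbf{\widetilde{M}}_{1,0}(\mathbb{P}^{n},3)_{0}.
\]
Both spaces are proper, smooth, and birational to $\mathbf{\overline{M}}_{1,0}(\mathbb{P}^{n},3)_{0}$; the morphism is obtained by noting that the stable map underlying any log stable family factors through the main component, and then invoking the universal property of the Vakil--Zinger blow-up (which is a modification supported precisely on the non-main boundary). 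One then restricts attention to the locus of $\mathbf{\widetilde{M}}_{1,0}(\mathbb{P}^{n},3)_{0}$ where $\pi$ is not an isomorphism. Using the degree $2$ comparison proved in Section $5$ (applied to the degree $2$ components appearing in $\sum_{2}$ and $\Gamma_{2}$) and a dimension/ramification count for the degree $3$ strata, I would identify this exceptional locus with $\sum_{1}\cup\sum_{2}\cup\Gamma_{1}\cup\Gamma_{2}$.

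The second step is to describe \'etale charts of $\mathbf{\widetilde{M}}_{1,0}(\mathbb{P}^{n},3)_{0}$ near the generic point of each of the four strata, as promised in Section $6$. Since $\mathbf{\widetilde{M}}$ is smooth, these charts are \'etale-locally products of affine spaces, and the four strata can be written as vanishing loci of a controlled number of coordinates (encoding the ramification index at the node for $\sum_{i}$ and the collision parameter $q\!\to\!p$ for $\Gamma_{i}$). In parallel, using Kim's logarithmic modification, I would write down the corresponding \'etale charts of $\overline{\mathbf{M}}_{1,0}^{log,ch}(\mathbb{P}^{n},3)$ in terms of subdivisions of the characteristic monoids, and read off the monomial blow-up centers.

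The third step is to carry out the blow-ups in the prescribed order $\sum_{2},\Gamma_{2},\sum_{1},\Gamma_{1}$ and match the result with the log charts. The ordering is forced by incidence: along a degree $2$ stratum one must first resolve the extra collapsed node before the ambient degree $3$ deformation becomes Cartier; equivalently, $\sum_{2}$ and $\Gamma_{2}$ lie in the closures of where $\sum_{1}$ and $\Gamma_{1}$ acquire worse singularities in the log picture, so blowing them up first makes the later proper transforms regular with smooth centers. After the four blow-ups one checks (chart by chart) that the proper transform of each stratum has become a normal-crossing divisor whose log structure coincides with Kim's, and then constructs an inverse morphism to $\pi$ using the universal property of each blow-up together with the fact that the pulled-back centers become Cartier on $\overline{\mathbf{M}}^{log,ch}_{1,0}(\mathbb{P}^{n},3)$.

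The main obstacle will be Step $3$: verifying, in the degree $3$ charts, that the iterated blow-up in exactly this order produces the log modification and nothing more. The bookkeeping is delicate because the four strata intersect non-transversely in $\mathbf{\widetilde{M}}_{1,0}(\mathbb{P}^{n},3)_{0}$, so at each stage one has to check that the proper transform of the next center is smooth and that no additional subdivision of the characteristic monoid is introduced by Kim's construction. A secondary difficulty is comparing the two natural log structures (the divisorial one coming from the successive exceptional divisors and the intrinsic log-stable-map one); this should reduce to a monoid-level check using the local models from Section $6$, but it is the place where a wrong blow-up order would be detected.
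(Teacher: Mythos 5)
Your overall skeleton---explicit \'etale charts, blowing up the four loci in the stated order, and matching with the logarithmic side---is close in spirit to Section 6, but the mechanism you propose for closing the argument is genuinely different from the paper's, and it is also where the gap lies. The paper never constructs a morphism $\overline{\mathbf{M}}_{1,0}^{log,ch}(\mathbb{P}^{n},3)\to\mathbf{\widetilde{M}}_{1,0}(\mathbb{P}^{n},3)_{0}$ and never invokes a universal property of the Vakil--Zinger blow-up. Instead it introduces the intermediate space $\overline{\mathbf{M}}_{1,0}^{ch}(\mathbb{P}^{n},3)$ of admissible stable maps \emph{without} log structures, and over each \'etale chart $S$ it builds by hand a family of admissible maps over the iterated blow-up $\widetilde{S}$ (by blowing up the universal curve and the trivial target $\widetilde{S}\times\mathbb{P}^{n}$ along explicit ideals). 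This yields a finite, one-to-one morphism $\mathbf{\widehat{M}}\to\overline{\mathbf{M}}_{1,0}^{ch}(\mathbb{P}^{n},3)$ from the blow-up along $\sum_{2}$, $\Gamma_{2}$, $\sum_{1}$, $\Gamma_{1}$; since the forgetful map $\overline{\mathbf{M}}_{1,0}^{log,ch}\to\overline{\mathbf{M}}_{1,0}^{ch}$ is also finite and one-to-one and both sources are smooth, uniqueness of normalization gives $\mathbf{\widehat{M}}\cong\overline{\mathbf{M}}_{1,0}^{log,ch}(\mathbb{P}^{n},3)$. Bijectivity is certified by the fiber computations of Section 4, which are substantially more delicate than a ``dimension/ramification count'' (the fibers over the four strata are spaces such as $Bl_{pt}\mathbb{P}^{n}$ and unions of $\mathbb{P}^{1}\times\mathbb{P}^{n-1}$ and $Bl_{pt}\mathbb{P}^{n}$ glued along linear subspaces).

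The concrete gap in your plan is Step 1 (and its mirror image at the end of Step 3). A proper birational morphism to $\mathbf{\overline{M}}_{1,0}(\mathbb{P}^{n},3)_{0}$ does not factor through the Vakil--Zinger blow-up merely because that blow-up is supported on the boundary; one must verify that the pullback to $\overline{\mathbf{M}}_{1,0}^{log,ch}$ of the ideal of the center (here the $3$-tail locus $D_{3}$) is invertible, and, for your inverse morphism, that the pullbacks of the ideals of $\sum_{2}$, $\Gamma_{2}$, $\sum_{1}$, $\Gamma_{1}$ become Cartier on the log space. Neither verification is indicated, and carrying them out would require essentially the same chart-by-chart computation the paper performs, at which point the normalization argument makes the universal-property route unnecessary. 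Your heuristic for the ordering of the centers is also not what the charts show: the relevant incidence recorded in the paper is that $\sum_{1}$ and $\Gamma_{2}$ meet and are only separated after the blow-up along $\sum_{2}$, which is why $\sum_{2}$ must come first.
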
\

\noindent
{\bf Acknowledgement} I would like to thank my advisor, Young-Hoon Kiem, for giving me this problem and all of the guidance and support during this work.\

\bigskip
\bigskip
\bigskip
\section{Preliminaries}
\bigskip

In this section we introduce some notations. We also briefly recall some definitions and properties of Vakil-Zinger's desingularization and the moduli space of logarithmic stable maps.

\subsection{Notations}
\subsubsection{A dual graph of domain curves} In this paper we only consider connected curves of arithmetic genus $1$. Note that every connected curve of arithmetic genus 1 has the unique minimal subcurve of arithmetic genus $1$. we give names to this subcurve.
\begin{definition} Let $C$ be connected curve of arithmetic genus 1. Let $C'$ be the minimal subcurve of arithmetic genus 1 of $C$. We call $C'$ the {\em essential part} of $C$.\
\end{definition}

For every nodal curve, we can associate a graph called the dual graph. Irreducible components of the nodal curve correspond to vertices of the graph. And nodal points of nodal curve correspond to edges of graph.

If curve $C$ is connected curve of arithmetic genus $1$ whose essential part is irreducible curve, we can represent its dual graph as following. Suppose $C$ has $6$ irreducible components $E$, $C_{1}$, $C_{2}$, $B_{1}$, $B_{2}$, $B_{3}$. $E$ is a smooth curve of arithmetic genus 1. Two smooth rational components $C_{1}$, $C_{2}$  are connected to $E$. And three smooth rational components $B_{1}$, $B_{2}$, $B_{3}$ are connected to $C_{1}$. Then we can represent the dual graph of $C$ as $E[C_{1}[B_{1},B_{2},B_{3}],C_{2}]$. In this case, we say $C$ is of the type $E[C_{1}[B_{1},B_{2},B_{3}],C_{2}]$. We denote the intersection point of $E$ and $C_{1}$ as $c_{1}$. And we denote intersection point of $C_{1}$ and $B_{1}$ as $b_{1}$ and so on.

\begin{figure}[here] 
\begin{center} 
\includegraphics[height=2in]{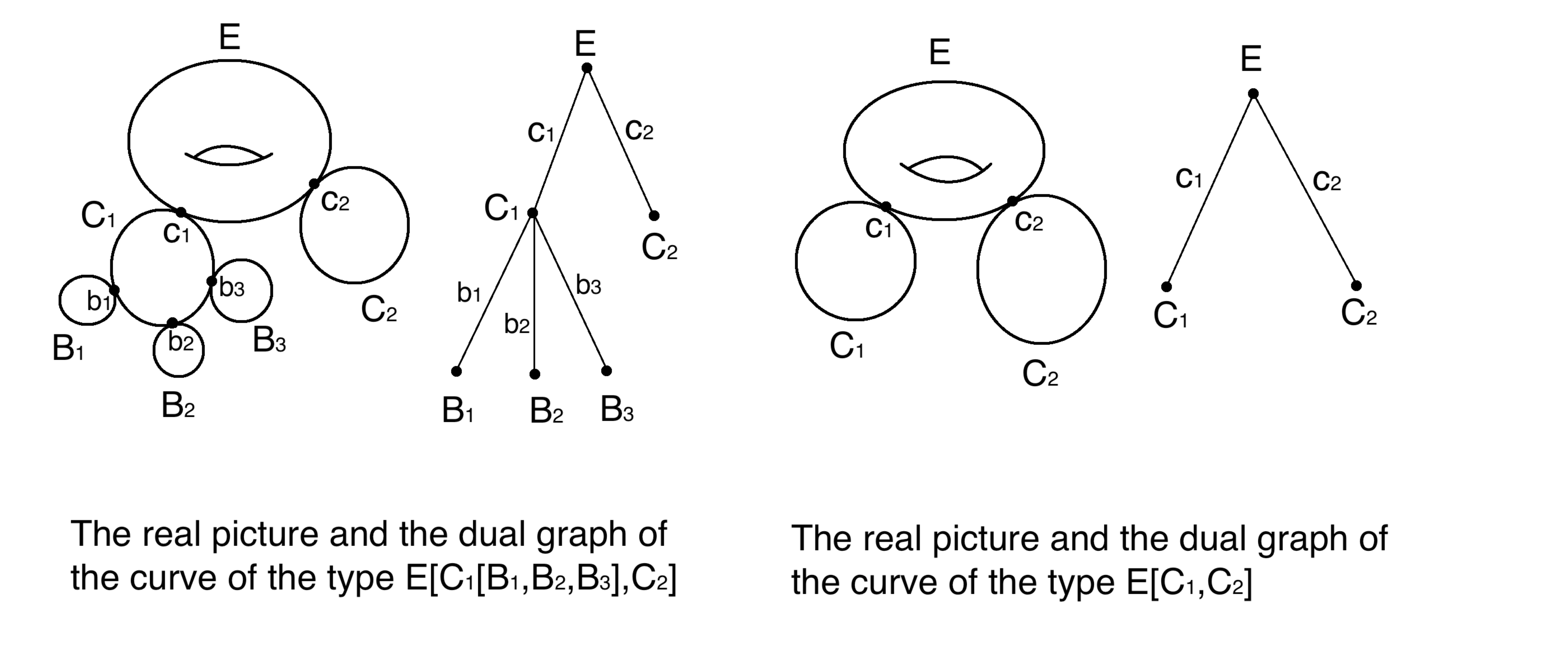} 

\end{center} 
\end{figure} 

Furthermore, if a curve $C$ is the domain curve of the Kontsevich's moduli space of elliptic stable maps, we record information of the degree in the parenthesis. For example, if we say that $C$ is of the type $E(0)[B_{1}(0)[C_{1}(1),C_{2}(2)]]$, then the dual graph of $C$ is represented as $E[B_{1}[C_{1},C_{2}]]$ and the degrees of maps restricted to $E$, $B_{1}$, $C_{1}$, $C_{2}$ are $0$, $0$, $1$, $2$, respectively. 

\subsubsection{The expanded target}
Let $\mathbb{P}^{n}$ be a n-dimensional projective space.
We define $\mathbb{P}^{n}(1)$ to be $(Bl_{c(0)}\mathbb{P}^{n})\bigcup\mathbb{P}^{n} $. Here $c(0)$ is a point in $\mathbb{P}^{n}$. And $Bl_{c(0)}\mathbb{P}^{n}$ and $\mathbb{P}^{n}$ are glued along $D(1)$. In $Bl_{c(0)}\mathbb{P}^{n}$, $D(1)$ is the exceptional divisor. And in $\mathbb{P}^{n}$, $D(1)$ is a hyperplane. We can give the linear order to the set of irreducible components of $\mathbb{P}^{n}(1)$ such that component corresponding to $\mathbb{P}^{n}$ is the largest one. We denote the irreducible components of $\mathbb{P}^{n}(1)$ by $\mathbb{P}^{n}_{1}$, $\mathbb{P}^{n}_{2}$ according to this order. i.e. $\mathbb{P}^{n}_{2}$ is the largest one.

We define $\mathbb{P}^{n}(2)$ to be $(Bl_{c(1)}\mathbb{P}^{n}(1))\bigcup\mathbb{P}^{n} $. Here $c(1)$ is a point in $\mathbb{P}^{n}_{2}$ not contained in D(1). And $Bl_{c(1)}\mathbb{P}^{n}(1)$ and $\mathbb{P}^{n}$ are glued along $D(2)$. In $Bl_{c(0)}\mathbb{P}^{n}(1)$, $D(2)$ is the exceptional divisor. And in $\mathbb{P}^{n}$, $D(2)$ is a hyperplane. We can give the linear order to the set of irreducible components of $\mathbb{P}^{n}(2)$ such that component corresponding to $\mathbb{P}^{n}$ is the largest one. We denote the irreducible components of $\mathbb{P}^{n}(2)$ by $\mathbb{P}^{n}_{1}$, $\mathbb{P}^{n}_{2}$, $\mathbb{P}^{n}_{3}$ according to this order. i.e. $\mathbb{P}^{n}_{3}$ is the largest one.

In this way, we define $\mathbb{P}^{n}(k)$, $\mathbb{P}^{n}_{1}$, $\mathbb{P}^{n}_{2}$, $\cdots$, $\mathbb{P}^{n}_{k+1}$, $D(1)$, $D(2)$, $\cdots$, $D(k)$ inductively.

\subsubsection{The sequence of blow up} 
Let $X$ be an algebraic scheme. Let $V_{1}$, $V_{2}$, $\cdots$, $V_{n}$ be subschemes of $X$. When we say that we blow up $X$ along $V_{1}$, $V_{2}$, $\cdots$, $V_{n}$, we mean that we first blow up $V_{1}$, blow up the proper transform of $V_{2}$, $\cdots$ , and blow up the proper transform of $V_{n}$.\

By abuse of notation, we identify an ideal $J$ with the subscheme $V_{J}$ defined by $J$.

\bigskip

\subsection{Vakil-Zinger Desingularization}

In \cite{VZ0}, Vakil and Zinger defined the $m$-tail locus of $\mathbf{\overline{M}}_{1,0}(\mathbb{P}^{n},d)_{0}$ to be the locus parametrizing maps such that in the domain the contracted elliptic curve meets the rest of the curve a total of precisely m points. Desingularization is described as following way; blow up the $1$-tail locus, then the proper transform of the $2$-tail locus, etc. This process stop at finite steps, and resulting space  $\mathbf{\widetilde{M}}_{1,0}(\mathbb{P}^{n},d)_{0}$ is smooth Delign-Mumford stack.\

In \cite{HL},  Hu and Li described local equations of $\mathbf{\overline{M}}_{1.0}(\mathbb{P}^{n},d)$. They first defined the terminally weighted tree $\gamma$. To each $\gamma$, they associated the variety $Z_{\gamma}$ called local model and the subvariety $Z^{0}_{\gamma} \subset Z_{\gamma}$ called the type $\gamma$ loci in $Z_{\gamma}$. They defined DM-stack $S$ to have singularity type $\gamma$ at a closed point $s\in S$ if there is a scheme $Y$, a point $y\in Y$ and two smooth morphisms $q_{1} : Y \rightarrow S$, $q_{2} : Y \rightarrow Z_{\gamma}$
such that $q_{1}(y) = s$ and $q_{2}(y) \in Z^{0}_{\gamma}$. To each element $[u] \subset \mathbf{\overline{M}}_{1,0}(\mathbb{P}^{n},d)$, they associated terminally weighted rooted tree. They defined the substack $\mathbf{\overline{M}}_{1,0}(\mathbb{P}^{n},d)_{\gamma} \subset \mathbf{\overline{M}}_{1,0}(\mathbb{P}^{n},d)$ to be the subset of all $[u] \subset \mathbf{\overline{M}}_{1,0}(\mathbb{P}^{n},d)$ whose associated terminally weighted rooted trees is $\gamma$. Finally they showed that the stack $\mathbf{\overline{M}}_{1,0}(\mathbb{P}^{n},d)$ has singularity type $\gamma$ along $\mathbf{\overline{M}}_{1,0}(\mathbb{P}^{n},d)_{\gamma}$

We do not present full details here since our case is quite simple. When $d=3$, $1$-tail locus $D_{1} \subset \mathbf{\overline{M}}_{1,0}(\mathbb{P}^{n},3)_{0}$ and $2$-tail locus $D_{2} \subset \mathbf{\overline{M}}_{1,0}(\mathbb{P}^{n},3)_{0}$ are smooth divisors. $3$-tail locus $D_{3} \subset \mathbf{\overline{M}}_{1,0}(\mathbb{P}^{n},3)_{0}$ has description as follows. Let $Z$ be $\{(a_{1},a_{2},\cdots,a_{n-1},b_{1},b_{2},\cdots,b_{n-1},z_{1},z_{2}) \in \mathbb{A}^{2n} : a_{1}z_{1}-b_{1}z_{2} = a_{2}z_{1}-b_{2}z_{2} = \cdots = a_{n-1}z_{1}-b_{n-1}z_{2}=0 \}$, where $\mathbb{A}^{n}$ is $n$-dimensional affine space. Let $Z^{0} \subset Z$ to be $\{(a_{1},a_{2},\cdots,a_{n-1},b_{1},b_{2},\cdots,b_{n-1},z_{1},z_{2}) \in Z : z_{1} = z_{2} = 0 \}$. To each element $[u]\subset D_{3}$, there is a scheme $Y$, a point $y\in Y$ and two smooth morphisms $q_{1} : Y \rightarrow \mathbf{\overline{M}}_{1.0}(\mathbb{P}^{n},d)_{0}$, $q_{2} : Y \rightarrow Z$ such that $q_{1}(y) = [u]$ and $q_{2}(y) \in Z^{0}$.
Therefore $\mathbf{\widetilde{M}}_{1,0}(\mathbb{P}^{n},3)_{0}$ = $Bl_{D_{3}}\mathbf{\overline{M}}_{1,0}(\mathbb{P}^{n},3)_{0}$.

\bigskip

\subsection{Logarithmic stable maps}

We briefly introduce logarithmic stable maps following \cite{Kim}. There is standard reference for definition and some properties of the log structures (\cite{Kato}). We do not give full details about the log structures since the log structures are not used extensively in this paper.

\begin{definition} An algebraic space $W$ over $S$ is called a {\em Fulton-Macpherson (FM) type space} if
\begin{enumerate}  
            \item $W \rightarrow S$ is a proper, flat morphism;
            \item for every closed point $s\in S$, etale locally there is an etale morphism\\
            $$W_{\overline{s}} \rightarrow Spec(k({\overline{s}})[x,y,z_{1},z_{2},\cdots,z_{k-1}]/(xy))$$
            where $x$, $y$ and $z_{i}$ are indeterminates.
\end{enumerate}
\end{definition}

\begin{definition}[\cite{Kim},5.1.1]
A triple
$((C/S, \mathbf{p}), W/S, f: C \longrightarrow W)$
 is called a $n$-pointed, genus $g$, {\em admissible map}
to a FM type space $W/S$ if

\begin{enumerate}

\item $(C/S, {\mathbf{p}}= (p_1,...,p_n))$ is a $n$-pointed, genus $g$, prestable curve over $S$.

\item $W/S$ is a FM type space.

\item $f:C\longrightarrow W$ is a map over $S$.

\item ({\em Admissibility}) If a point $p\in C$ is mapped into the relatively singular locus $(W/S)^{\mathrm{sing}}$ of $W/S$,
then \'etale locally at $\bar{p}$, $f$ is factorized as
\[ \xymatrix{ C  \ar[dd]_{f}\ar[dr]&         & U\ar[ll]\ar[rr] \ar[rd] \ar[dd] &    & \Spec(A[u,v]/(uv- t)) \ar[ld]  \ar[dd]\\
                                                         & S   &     & \Spec A \ar'[l][ll]  &                          \\
                       W   \ar[ur] &       &   V \ar[ll]\ar[rr] \ar[ur]&     & \Spec A[x,y, z_1,...,z_{r-1}]/(xy-\tau ) \ar[ul]
      } \]
     where all 5 horizontal maps
 are formally \'etale; $u, v, x, y, z_i$ are indeterminates;
  $x=u^l$, $y=v^l$ under the far right vertical map for some positive integer $l$;
 $t, \tau$ are elements in the maximal ideal $\mathfrak{m}_A$ of
 the local ring $A$; and $\bar{p}$ is mapped to the point defined by the ideal $(u,v,\mathfrak{m}_A)$.

\end{enumerate}
\end{definition}

A log morphism $(W, M_W) /(S,N)$ is called an extended log twisted FM type space if $W\rightarrow S$ is FM type space and $M_{W}$, $N$ are log structures on $W$, $S$ satisfying some conditions.

\begin{definition}[\cite{Kim},5.2.2]
A log morphism $\left( f: (C,M_C, {\mathbf{p}})\longrightarrow(W, M_W)\right) /(S,N) $
      is called a $(g,n)$ {\em logarithmic prestable map over $(S,N)$}
       if

\begin{enumerate}

    \item $((C,M)/(S,N), {\mathbf{p}})$ is a $n$-pointed, genus g, minimal log prestable curve.

    \item $(W, M_W) /(S,N)$ is an extended log twisted FM type space.

    \item  ({\em Corank = \# Nondistinguished Nodes Condition})
              For every  $s\in S$, the rank of $\mathrm{Coker}(N^{W/S}_{\bar{s}} \longrightarrow N_{\bar{s}})$ coincides with
                the number of nondistinguished nodes on $C_{\bar{s}}$.

    \item $f: (C,M_{C})\longrightarrow (W, M_{W})$ is a log morphism over $(S,N)$.

    \item\label{LogAdmissible} ({\em Log Admissibility}) either of the following conditions, equivalent
    under the above four conditions, holds:

    \begin{itemize}

    \item $\underline{f}$ is admissible.

    \item $f^b: f^*M_{W}\longrightarrow M_{C}$ is simple at every distinguished node.

    \end{itemize}

   \end{enumerate}
\end{definition}

\begin{definition}[\cite{Kim},8.1]
Let $\mathbf{\overline{M}}_{1,0}^{log,ch}(X,d)$ be
the moduli stack of $(g=1,n=0,d\ne 0)$ logarithmic stable maps $(f,C,W)$
satisfying the following conditions additional to those in
Definition 3.0.2. For every $s\in S$,
\begin{enumerate}
\item
Every end component of $W_{\bar{s}}$  contains  the entire
image of  the essential part of $C_{\bar{s}}$ under $f_{\bar{s}}$.

\item The image of the essential part of $C_{\bar{s}}$  is nonconstant.

\end{enumerate}
\end{definition}
Here, it is possible that some of irreducible components in the essential part are mapped to points.
Note that the dual graph of the target $W_s$ must be a chain.
Such a log stable map is called an elliptic log stable map to a chain type FM space $W$ of the
smooth projective variety $X$.

\begin{theorem}[\cite{Kim},Main Theorem B]
The moduli stack $\mathbf{\overline{M}}_{1,0}^{log,ch}(X,d)$ of elliptic logarithmic stable maps to chain type FM spaces of $X$ is a proper Delign-Mumford stack. When $X$ is a projective space $\mathbb{P}^{n}$, the stack is smooth.
\end{theorem}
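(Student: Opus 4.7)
The statement has three independent parts: algebraicity plus the Deligne-Mumford property, properness, and smoothness when $X=\mathbb{P}^{n}$. I would attack them in that order. For algebraicity I would realize $\mathbf{\overline{M}}_{1,0}^{log,ch}(X,d)$ as a substack of the stack of log morphisms from minimal log prestable genus-one curves to FM type spaces, cut out by the chain condition, the log admissibility of Definition 2.3.3, and the two end-component conditions of Definition 2.3.4. Algebraicity of the ambient stack comes from a standard construction: the stack of FM type targets is algebraic via a Hilbert-scheme-of-expansions argument, the Kontsevich-type moduli of maps into this universal target is algebraic, and minimal log structures on source and target are parametrized by Deligne-Faltings data, which preserves algebraicity. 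The DM property follows because an infinitesimal automorphism must fix the underlying Kontsevich stable map; once the essential part of $C$ maps nonconstantly into an end component, the $\mathbb{G}_m$-scaling automorphisms of each blown-up factor of $W$ act nontrivially on that image, forcing the automorphism group of the log stable map to be finite and reduced.

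\textbf{Properness via the valuative criterion.} Start with a log stable map over a punctured DVR spectrum $\Spec K$. Properness of the Kontsevich stack $\mathbf{\overline{M}}_{1,0}(X,d)$ yields an extension of the underlying stable map to $\Spec R$; however, at the closed point the essential part of the limit curve may be contracted to a point $c\in X$, violating the end-component condition of Definition 2.3.4. I would then iteratively expand the target: replace $X$ by $Bl_{c}X$ glued along the exceptional divisor to a fresh copy of $X$, lift the limit map using the admissibility factorization of Definition 2.3.2, and, if the essential part is still contracted in the new end component, blow up again at the image. Each step strictly decreases a numerical degeneracy measure along the essential part, so the process terminates in finitely many steps and produces an elliptic log stable map to a chain-type FM space. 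Uniqueness follows from minimality of the log structure, which forces the expansion combinatorics and the monoid maps to be determined by the generic log stable map together with its underlying Kontsevich limit.

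\textbf{Smoothness for $\mathbb{P}^{n}$.} I would use deformation theory. The obstruction space of $\mathbf{\overline{M}}_{1,0}^{log,ch}(\mathbb{P}^{n},d)$ at $(f,C,W)$ is governed by $H^{1}$ of $f^{*}T^{\log}_{W}$, taken relative to the (already smooth) stacks of minimal log prestable curves and FM type targets. The admissibility factorization and the description of the log tangent sheaf of the chain-type FM expansion $W$ of $\mathbb{P}^{n}$ give an exact sequence relating $f^{*}T^{\log}_{W}$ to direct sums of pullbacks $f_{j}^{*}T_{\mathbb{P}^{n}}$ on the components, twisted by the divisors $D(i)$; the essential part contributes only trivially since its image sits in a single end component where $T^{\log}_{W}$ restricts to $T_{\mathbb{P}^{n}}$. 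Since $T_{\mathbb{P}^{n}}$ is globally generated and each non-essential component is rational, the relevant $H^{1}$ vanishes, yielding smoothness.

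\textbf{Main obstacle.} The hardest step is properness, specifically the simultaneous construction of the expanded target, the lifted map, and a compatible minimal log structure. One must decide how many blow-ups to perform and where on each end component they should occur, verify that the log monoid produced is minimal in Kim's sense, and check that the resulting object satisfies every clause of Definitions 2.3.3 and 2.3.4. Showing that this recipe always terminates and produces an extension that is unique up to unique isomorphism is where the bulk of the technical work lies.
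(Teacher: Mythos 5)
This statement is not proved in the paper at all: it is quoted verbatim from Kim's work (cited as Main Theorem B of \cite{Kim}) and used as a black box, so there is no internal proof to compare your proposal against. Judged on its own, your outline is a credible reconstruction of the general strategy one finds in the literature on expanded degenerations and log stable maps --- algebraicity via the stack of FM expansions and (minimal) log structures, properness via stable reduction with iterated blow-ups of the target, smoothness via vanishing of the log obstruction space --- but it is a plan rather than a proof, and you yourself flag that the bulk of the work (termination and uniqueness in the valuative criterion, minimality of the resulting log structure) is deferred.

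One concrete gap worth naming in the smoothness step: Definition 2.3.4 of the paper explicitly allows irreducible components of the essential part to be contracted to points; only the essential part as a whole must have nonconstant image. Your argument that ``the essential part contributes only trivially since its image sits in a single end component where $T^{\log}_{W}$ restricts to $T_{\mathbb{P}^{n}}$'' therefore does not dispose of the genus-one contribution to $H^{1}$, because a contracted arithmetic-genus-one subcurve of the essential part is exactly the configuration responsible for the singularities of $\mathbf{\overline{M}}_{1,0}(\mathbb{P}^{n},d)_{0}$ in the first place. The whole content of the smoothness claim is that the log structure (via the twisting along the divisors $D(i)$ and the distinguished nodes) kills this obstruction; a correct proof must compute $H^{1}$ of the genuinely logarithmic tangent complex on such degenerate configurations, not reduce to global generation of $T_{\mathbb{P}^{n}}$ on rational tails. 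As written, that step would fail on precisely the strata $\sum_{i}$, $\Gamma_{i}$ that the rest of the paper is concerned with.
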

\

We define the moduli space $\mathbf{\overline{M}}_{1,0}^{ch}(X,d)$ of admissible stable maps to chain type FM spaces of $X$ to be same as $\mathbf{\overline{M}}_{1,0}^{log,ch}(X,d)$ without log structures. That is, an element of $\mathbf{\overline{M}}_{1,0}^{ch}(X,d)$ is an element of $\mathbf{\overline{M}}_{1,0}^{log,ch}(X,d)$ without log structures.

\bigskip
\bigskip
\bigskip
\bigskip


\section{An example of degeneration}

\bigskip

First we construct a family of elliptic stable maps over $S=\mathbb{A}^{2}$.
Let $R=k[t,a]$ be a coordinate ring of $S$, where k is a algebraically closed field and $t$,$a$ are indeterminates.
Let $C'=Proj(R[x,y,z]/zy^{2}-x^{3}-z^{2}x-z^{3})$. Let $f':C'\dashrightarrow\mathbb{P}^{2}$ be given by $[t^{3}y,at^{2}x,z]$. It is a well defined family of elliptic stable maps except at $\{ (t,a) : t=0\} \subset S$. If we blow up an ideal $(t,x,z)$, it extends to family of elliptic stable maps on whole $S$. That is, if we let $C=Bl_{(t,x,z)}C'$, the rational morphism $f':C'\dashrightarrow\mathbb{P}^{2}$ extends to $f:C\longrightarrow\mathbb{P}^{2}$ and $f$ gives a family of elliptic stable maps over $S$. At $t\neq0$, its domain curve is smooth. At $t=0$ its domain curve consists of an elliptic component whose degree is $0$ and one rational component whose morphism is given by $[s^{3},as^{2},1]$ where $s$ is the local coordinate of the rational component such that $\{s=0\}$ is the intersection point with elliptic component.\\

Now we construct a family of elliptic stable admissible maps over $\widetilde{S}=Bl_{(t,a)}S$ in following way. 

 \begin{proposition}
 Let $R$, $C'$, $C$, $f$ be as above. Let $\widetilde{S}$ be the blow up of $S$ at the origin and let $E$ be the exceptional divisor. Let $D$ be the proper transform of subscheme defined by $(t)\subset R$.
 Let $C''$ be a pullback of $C'$ along $\widetilde{S}\longrightarrow S$ and $\widetilde{C}$ be the blow up of $C''$ along ideals $(D,x,z)$, $(E,x,z)$; Here we mean that first blow up along $(D,x,z)$ and next blow up along the proper transform of $(E,x,z)$. Let $\widetilde{W}$ be the blow up of $\widetilde{S}\times\mathbb{P}^{2}$ along ideals $(E^{3},x_{0},x_{1}), (D^{2},x_{0},x_{1})$, where $x_{0}$, $x_{1}$, $x_{2}$ are coordinates of $\mathbb{P}^{2}$. Then $f:C\longrightarrow\mathbb{P}^{2}$ extends to map $\widetilde{f}:\widetilde{C}\longrightarrow\widetilde{W}$ and $(\widetilde{f}:\widetilde{C}\longrightarrow\widetilde{W})$ is a family of admissible stable maps.
 \end{proposition}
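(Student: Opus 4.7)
The plan is to verify all claims by a local computation in the two standard affine charts of $\widetilde{S}$. Cover $\widetilde{S}$ by $U_{1}$, with coordinates $(t,u)$ and $a=tu$, so that $E=\{t=0\}$ and $D\cap U_{1}=\emptyset$, and by $U_{2}$, with coordinates $(s,a)$ and $t=as$, so that $E=\{a=0\}$ and $D=\{s=0\}$ meet transversely at the origin. In each chart one rewrites the pulled back map, constructs explicit charts of $\widetilde{C}$ and $\widetilde{W}$ from the given sequential blowups, and verifies the extension and admissibility conditions of Definitions~2.3.1 and~2.3.2.

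On $U_{1}$ the pulled back map is $[t^{3}y:t^{3}ux:z]$; the $(D,x,z)$ blowup is trivial here, and the $(E,x,z)=(t,x,z)$ blowup on the domain is the one already used in constructing $C/S$. In the affine chart of $C''$ near the section $[0:1:0]$, the Weierstrass equation expresses $z$ as an analytic function of $x$, so $C''$ is locally smooth with coordinates $(t,u,x)$; blowing up $(t,x)$ yields $\widetilde{C}$ with a new rational tail $C_{E}$ in each fiber over $E$. The target blowup $(E^{3},Y_{0},Y_{1})=(t^{3},Y_{0},Y_{1})$ attaches a new $\mathbb{P}^{2}$ to $\widetilde{S}\times\mathbb{P}^{2}$ over $E$ along the line $\{Y_{0}=Y_{1}=0\}$; in the chart where $Y_{0}/Y_{2}$ is the primary generator the pulled back map becomes regular and sends $C_{E}$ onto the new $\mathbb{P}^{2}$. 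On $U_{2}$ the map is $[a^{3}s^{3}y:a^{3}s^{2}x:z]$, and the two sequential domain blowups along $(s,x,z)$ and then the strict transform of $(a,x,z)$ produce rational tails $C_{D}$ over $D$ and $C_{E}$ over $E$; at the triple point $E\cap D$ the fiber has type $E_{0}[C_{D}[C_{E}]]$. Correspondingly the two target blowups attach new $\mathbb{P}^{2}$'s over $E$ and $D$ with exponents $3$ and $2$ respectively, matching exactly the powers $a^{3}$ and $s^{2}$ appearing in the map's coordinates.

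The core step is the admissibility check at each node. At the node of $\widetilde{C}$ where $C_{E}$ meets its neighbor, a chart of the domain blowup yields local coordinates $(u,v)$ with $uv=t$ (respectively $uv=a$ on $U_{2}$); the corresponding node of $\widetilde{W}$ has local equation $xy=t^{3}$ (respectively $xy=a^{3}$) from the cube in $(E^{3},Y_{0},Y_{1})$, and the explicit formula for $\widetilde{f}$ identifies $x=u^{3},\,y=v^{3}$ up to units, which is the admissibility diagram of Definition~2.3.2 with $l=3$. A parallel computation at the node adjacent to $C_{D}$ gives $xy=s^{2}$ and $l=2$ from the square in $(D^{2},Y_{0},Y_{1})$. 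At the triple point $E\cap D$ the two chain nodes are handled in two separate charts of $\widetilde{W}$ and reduce to the cases above. The main obstacle is choosing, for each node, a chart of $\widetilde{W}$ that contains its image and in which the admissibility exponents become manifest; once this bookkeeping is done the verification is direct, and stability of each fiber is automatic since every new rational tail maps with positive degree onto a newly attached $\mathbb{P}^{2}$ component of $\widetilde{W}$.
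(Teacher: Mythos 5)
Your proposal is correct and follows essentially the same route as the paper: a direct computation in the affine charts of $\widetilde{S}$, reduced to a neighborhood of the contracted point $[0:1:0]$ where the Weierstrass equation expresses $z$ as a function of $x$ vanishing to order $3$. The paper packages the extension step as a small lemma verified via the universal property of blow-ups (invertibility of the inverse image ideal sheaves of the blow-up centers) and leaves the admissibility check to the reader, whereas you construct the charts and the node coordinates with the exponents $l=3,2$ explicitly, but the substance is the same.
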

    \begin{proof}
        we choose one local coordinate of $\widetilde{S}$ as $\{(t,a)\}\simeq\mathbb{A}^{2}$ such that $\widetilde{S}\longrightarrow S$ is given by $(t,a)\mapsto (ta,a)$. Then the induced morphism is given by $[t^{3}a^{3}y:t^{2}a_{3}x,z]$. Since we only need to consider a neighborhood of $\{[x:y:z]=[0,1,0]\}$ which is smooth point, the problem is reduced to the following lemma.
        \begin{lemma} Suppose $(f:C=\mathbb{A}^{1}\times\mathbb{A}^{2}=Spec(k[x,t,a])\longrightarrow W=\mathbb{A}^{2}\times\mathbb{A}^{2}=Spec(k[X,Y,t,a]))$ is given by $(x,t,a)\mapsto(\frac{t^{3}a^{3}}{z},\frac{t^{2}a^{3}x}{z},t,a)$, where $z$ is a function of $x$ such that the vanishing order of z at $x=0$ is 3. If we let $\widetilde{C}$ be the blow up of $C$ along ideals $(x,t)$, $(x,a)$ and $\widetilde{W}$ be the blow up of $W$ along ideals $(X,Y,a^{3})$, $(X,Y,t^{2})$, then $(f:C\longrightarrow W)$ extends to morphism $(\widetilde{f}:\widetilde{C}\longrightarrow\widetilde{W})$
        \end{lemma}
           \begin{proof}
              Using universal property of blow ups, we need to check that inverse image sheaves of $(X,Y,a^{3})$ and $(X,Y,t^{3})$ are invertible. 
              For example, at an open set $U\subset C$ given by $\{(x,t,a):x\neq 0)\}$, inverse image sheaves of $(X,Y,a^{3})$ and $(X,Y,t^{3})$ are $(\frac{t^{3}a^{3}}{z},\frac{t^{2}a^{3}x}{z},a_{3})=(a^{3})$ and $(\frac{t^{3}a^{3}}{z},\frac{t^{2}a^{3}x}{z},t^{2})=(t^{2})$ respectively which are invertible sheaves.
              Other cases are left to the reader.          
           \end{proof}

We can easily check that $\widetilde{f}$ satisfies admissible conditions.
In the same way we can prove the case of the other open sets of $\widetilde{S}$. This proves theorem.        
    \end{proof}

\remark the origin in $S$ parameterizes stable map whose domain curve consist of an elliptic component of degree $0$ and one rational component whose morphism has ramification order $3$ at the intersection point with the elliptic component. i.e. it is an element of $\sum_{1}$. 
\remark In the proof, we can describe an element of admissible stable map explicitly. For example over $\{(t,a):a=0, t\neq 0\}\subset\widetilde{S}$, $\widetilde{C}$ is of the type $E[C_{1}]$ and $\widetilde{W}=\mathbb{P}^{2}(1)$ and $\widetilde{f}|_{E}:E\longrightarrow\mathbb{P}^{2}_{1}$ is given by $[X_{0},X_{1},X_{2}]=[t^{3}y:t^{2}x:z]=[ty:x:z]$ where $X_{0}, X_{1}, X_{2}$ are coordinates of $\mathbb{P}^{2}_{1}$ such that $D(1)$ is given by $\{[X_{1},X_{2},X_{3}]:X_{2}=0\}$. The last equality is due to the existence of an automorphism of $\mathbb{P}^{2}_{1}$ fixing $D(1)$.\
\\
\\
\\

\section{The description of fiber in the moduli space of elliptic admissible stable maps}
\bigskip
By the definition of admissible stable map we get the following proposition.

\begin{proposition}There is a natural morphism $\phi$ from the moduli space of elliptic admissible stable maps to the Kontsevich's moduli space of stable maps.
\begin{proof}
A family of admissible maps over $\tilde{S}$ consist of $((\tilde{C}/\tilde{S}, \mathbf{p}), W/\tilde{S}, \tilde{f}: \tilde{C} \longrightarrow W)$, where $\tilde{C}$ is a family of pre-stable curves over $\tilde{S}$ and $W$ is $FM$ type space of $\mathbb{P}^{2}$. By just forgetting W, we obtain Kontsevich's pre-stable maps and after stabilization we get Kontsevich's stable maps.
\end{proof}
\end{proposition}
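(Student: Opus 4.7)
The plan is to construct $\phi$ functorially on families. Given a family of admissible stable maps $\bigl((\tilde{C}/\tilde{S}, \mathbf{p}), W/\tilde{S}, \tilde{f}: \tilde{C} \to W\bigr)$ with $W$ a chain type FM space of $\mathbb{P}^n$, the goal is to produce a canonical Kontsevich stable map $\tilde{C}' \to \mathbb{P}^n$ over $\tilde{S}$, and then verify that the construction commutes with base change on $\tilde{S}$.

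The first step is a canonical contraction $\pi_W: W \to \tilde{S} \times \mathbb{P}^n$. By the inductive construction in Section 2, $\mathbb{P}^n(k) = (\mathrm{Bl}_{c(k-1)}\mathbb{P}^n(k-1)) \cup \mathbb{P}^n$ is obtained from $\mathbb{P}^n$ by repeatedly blowing up a point and gluing in a fresh copy of $\mathbb{P}^n$ along the exceptional divisor. Each end component may be collapsed to the blown-up point in the component beneath it, producing a tower $\mathbb{P}^n(k) \to \mathbb{P}^n(k-1) \to \cdots \to \mathbb{P}^n$. In the family, these contractions globalize: on each \'etale model $\Spec(A[x,y,z_1,\ldots,z_{r-1}]/(xy-\tau))$ of $W$ one simply forgets the smoothing coordinates to land in the corresponding slice of $\tilde{S}\times\mathbb{P}^n$.

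Composing with $\tilde{f}$ produces a family of prestable maps $g = \pi_W \circ \tilde{f} : \tilde{C} \to \tilde{S} \times \mathbb{P}^n$ of the correct genus and degree. In general $g$ is not Kontsevich stable: contracted rational components of $\tilde{C}$ may carry fewer than three special points, and the essential part itself may now be mapped to a single point, namely when its image in $W$ lies in a higher end component that is collapsed by $\pi_W$. I would then apply the standard stabilization procedure for prestable maps, which over any base produces a universal contraction $\tilde{C} \to \tilde{C}'$ that collapses exactly the unstable components, yielding a Kontsevich stable map $\tilde{C}' \to \mathbb{P}^n$ over $\tilde{S}$.

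Both $\pi_W$ and the prestable-to-stable contraction commute with arbitrary base change on $\tilde{S}$, so the whole assignment is functorial and defines a morphism of stacks $\phi$. The only mild obstacle is to verify that the stabilization works in families for this specific class of prestable maps; this follows from the fact that each component that gets contracted has its image already collapsed by $\pi_W$, so degrees add correctly under the stabilization. Because the essential part of $\tilde{C}$ is assumed to map nonconstantly into $W$, the resulting Kontsevich stable map lies in the closure of the main component $\mathbf{\overline{M}}_{1,0}(\mathbb{P}^n,d)_0$, so $\phi$ takes values in the main component as expected.
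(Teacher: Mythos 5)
Your proposal is correct and follows essentially the same route as the paper's (very terse) proof: compose with the canonical contraction of the FM space to $\tilde{S}\times\mathbb{P}^n$ and then stabilize. You merely make explicit the contraction $\pi_W$ and the base-change compatibility that the paper compresses into the phrase ``by just forgetting $W$.''
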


Now we describe set theoretic fibers of $\phi$, when $d=3$. Note that if the essential part is not contracted to a point, the fiber is just one point because $W$ is trivial. i.e. $W =\mathbb{P}^{n}$.
Let's consider the fiber of element where the essential part is contracted to a point.

\lemma{} Let $(C,f:C\longrightarrow \mathbb{P}^{n})$ be an element of the main component of the Kontsevich's moduli space of elliptic stable maps satisfying following condition.
$C$ is of the type $E(0)[C_{1}(3)]$. 
\begin{enumerate}
  \item if $f$ has ramification order $2$ at $c_{1}$ and there is no smooth point $q_{1}\in C_{1}$ such that $f(c_{1})=f(q_{1})$,
then the fiber of $\phi$ is equal to a point set theoretically.
  \item if $f$ has ramification order $2$ at $c_{1}$ and there is a smooth point $q_{1}\in C_{1}$ such that $f(c_{1})=f(q_{1})$,
then the fiber of $\phi$ is equal to $\mathbb{P}^{n-1}$ set theoretically.
  \item if $f$ has ramification order $3$ at $c_{1}$,
then the fiber of $\phi$ is equal to $Bl_{pt}\mathbb{P}^{n}$ set theoretically.
\end{enumerate}

\begin{proof}

 \begin{enumerate}
  \item \begin{itemize}
            \item \textbf{point : } The domain curve $\tilde{C}$ is of the type $E[C_{1}]$ and $W=\mathbb{P}^{n}(1)$. $\tilde{f}:C_{1}\longrightarrow\mathbb{P}^{n}_{0}$ is already given. $\tilde{f}:E\longrightarrow\mathbb{P}^{n}_{1}$ is given by $[X_{0}:X_{1}:\cdots:X_{n}]=[x:0:\cdots:0:z]$, where $E$ are given by $\{[x,y,z]:zy^{2}=x^{3}+z^{2}x+Az^{3}\}$ and $X_{0}$, $X_{1}$, $\cdots$, $X_{n}$ are coordinates of $\mathbb{P}^{n}$. $c_{1}$ is given by $\{[x:y:z]: z=x=0\}$ and $D(1)$ is given by $\{[X_{1},X_{2},\cdots,X_{n}] : X_{n}=0\}$.          
           \end{itemize}\
  
  \item \begin{itemize} 
            \item \textbf{ $\mathbb{A}^{n-1}$ with parameter \{ [$\alpha_{0} :\alpha_{1}:\cdots:\alpha_{n-1}$] , $\alpha_{n-1}\neq 0$ \} : }  The domain curve $\tilde{C}$ is of the type $E[C_{1}[A_{1}]]$ and $W=\mathbb{P}^{n}(1)$. $\tilde{f}|_{C_{1}}:C_{1}\longrightarrow \mathbb{P}^{n}_{0}$ is already given. $\tilde{f}|_{E}:E\longrightarrow\mathbb{P}^{n}_{1}$ is given by $[X_{0}:X_{1}:\cdots:X_{n}]=[x:0:\cdots:0:z]$. $\tilde{f}|_{A_{1}}:A_{1}\longrightarrow\mathbb{P}^{n}(1)$ is given by $[1:\alpha_{0}t :\alpha_{1}t:\cdots:\alpha_{n-1}t]$, where $t$ are a local parameter of $A_{1}$ such that $a_{1}$ is given by $\{t=0\}$ and $D(1)$ is given by $\{[X_{1},X_{2},\cdots,X_{n}] : X_{n}=0\}$.\
            \item \textbf{$\mathbb{P}^{n-2}$ with parameter \{ [$\alpha_{0} :\alpha_{1}:\cdots:\alpha_{n-2}$] \} : } The domain curve $\tilde{C}$ is of the type $E[C_{1}'[C_{1}[A_{1}]]]$ and $W=\mathbb{P}^{n}(2)$. $\tilde{f}|_{C_{1}}:C_{1}\longrightarrow\mathbb{P}^{n}_{0}$ already given. $\tilde{f}|_{C_{1}'}:C_{1}'\longrightarrow\mathbb{P}^{n}_{1}$ is given by $[t^{2}:0:\cdots:0:1]$ where $t$ is a local parameter of $C_{1}'$ such that $c_{1}$ are given by $\{t=0\}$ and $D(1)$ is given by $\{[X_{1},X_{2},\cdots,X_{n}] : X_{n}=0\}$. $\tilde{f}|_{E}:E\longrightarrow\mathbb{P}^{n}_{2}$ is given by $[x:0:\cdots:0:z]$ where $D(2)$ are given by $\{[X_{1},X_{2},\cdots,X_{n}] : X_{n}=0\}$. $\tilde{f}|_{A_{1}}:A_{1}\longrightarrow\mathbb{P}^{2}_{1}$ is given by $[1:\alpha_{0}s:\alpha_{1}s:\cdots:\alpha_{n-2}s:s]$ where $s$ is a local parameter of $A_{1}$ such that $a_{1}$ is given by $\{s=0\}$.
          \end{itemize}\
  
  \item \begin{itemize} 
            \item \textbf{$\mathbb{A}^{n}$ with parameter \{$[\alpha_{0}:\alpha_{1}:\cdots:\alpha_{n}]$,$\alpha_{n}\neq 0$\} : } The domain curve $\tilde{C}$ is of the type $E[C_{1}]$ and $W=\mathbb{P}^{n}(1)$. $\tilde{f}|_{C_{1}}:C_{1}\longrightarrow\mathbb{P}^{n}_{0}$ already given. $\tilde{f}|_{E}:E\longrightarrow\mathbb{P}^{n}_{1}$ is given by $[X_{0}:X_{1}:\cdots:X_{n}]=[\alpha_{0} x +\alpha_{n}y:\alpha_{1} x:\alpha_{2}:\cdots:\alpha_{n-1}:z]$, where $D(1)$ is given by $\{[X_{1},X_{2},\cdots,X_{n}] : X_{n}=0\}$.\\
            
            \item \textbf{$\mathbb{P}^{n-1}$ \textbackslash  $pt$ with parameter \{$[\alpha_{0}:\alpha_{1}:\cdots:\alpha_{n-1}]$, not all $\alpha_{k}$ are 0 for $1\leqslant k\leqslant n-1$\} : } The domain curve $\tilde{C}$ is of the type $E[C_{1}'[C_{1}]]$ and $W=\mathbb{P}^{n}(2)$. $\tilde{f}|_{C_{1}}:C_{1}\longrightarrow\mathbb{P}^{n}_{0}$ is already given. $\tilde{f}|_{C_{1}'}:C_{1}'\longrightarrow\mathbb{P}^{n}_{1}$ is given by $[1-\alpha_{0}t:\alpha_{1}t:\alpha_{2}t:\cdots:\alpha_{n-1}t:t^{3}]$ where $t$ is a local parameter of $C_{1}'$ such that $c_{1}$ is given by $\{t=0\}$ and $D(1)$ is given by $\{[X_{1},X_{2},\cdots,X_{n}] : X_{n}=0\}$. $\tilde{f}|_{E}:E\longrightarrow\mathbb{P}^{n}_{2}$ is given by $[x:0:\cdots:0:z]$ where $D(2)$ are given by $\{[X_{1},X_{2},\cdots,X_{n}] : X_{n}=0\}$.\\
            
            \item \textbf{$\mathbb{A}^{n-1}$ with parameter \{ [$\alpha_{0} :\alpha_{1}:\cdots:\alpha_{n-1}$] , $\alpha_{n-1}\neq 0$ \} : } The domain curve $\tilde{C}$ is of the type $E[C_{1}'[C_{1},A_{1}]]$ and $W=\mathbb{P}^{n}_{2}$. $\tilde{f}|_{C_{1}}:C_{1}\longrightarrow\mathbb{P}^{n}_{0}$ is already given. $\tilde{f}|_{C_{1}'}:C_{1}'\longrightarrow\mathbb{P}^{n}_{1}$ is given by $[1-t:0:\cdots:0:t^{3}]$ where $t$ is a local parameter of $C_{1}'$ such that $c_{1}$ is given by $\{t=0\}$ and $a_{1}$ is given by $\{t=1\}$ and $D(1)$ is given by $\{[X_{1},X_{2},\cdots,X_{n}] : X_{n}=0\}$. $\tilde{f}|_{E}:E\longrightarrow\mathbb{P}^{n}_{2}$ is given by $[x:0:\cdots:0:z]$. $\tilde{f}|_{A_{1}}:A_{1}\longrightarrow\mathbb{P}^{n}_{2}$ is given by $[1:\alpha_{0}:\alpha_{1}:\cdots:\alpha_{n-1}s]$, where $s$ is a local parameter of $A_{1}$ such that $a_{1}$ is given by $\{s=0\}$ and $D(2)$ is given by $\{[X_{1},X_{2},\cdots,X_{n}] : X_{n}=0\}$.\\
            
            \item \textbf{$\mathbb{P}^{n-2}$ with parameter \{ [$\alpha_{0} :\alpha_{1}:\cdots:\alpha_{n-2}$] \} : } The domain curve $\tilde{C}$ is of the type $E[C_{1}''[C_{1}'[C_{1},A_{1}]]]$ and $W=\mathbb{P}^{n}(3)$. $\tilde{f}|_{C_{1}}:C_{1}\longrightarrow\mathbb{P}^{n}_{0}$ is already given. $\tilde{f}|_{C_{1}'}:C_{1}'\longrightarrow\mathbb{P}^{n}_{1}$ is given by $[1-t:0:\cdots:0:t^{3}]$ where $t$ is a local parameter of $C_{1}'$ such that $c_{1}$ is given by $\{t=0\}$ and $a_{1}$ is given by $\{t=1\}$ and $D(1)$ is given by $\{[X_{1},X_{2},\cdots,X_{n}] : X_{n}=0\}$. $\tilde{f}|_{C_{1}''}:C_{1}''\longrightarrow\mathbb{P}^{n}_{2}$ is given by $[1:0:\cdots:0:s^{2}]$ where $s$ is parameter of $C_{1}''$ such that $c_{1}'$ is given by $\{s=0\}$. $\tilde{f}|_{A_{1}}:A_{1}\longrightarrow\mathbb{P}^{n}_{2}$ is given by $[1:\alpha_{0}u:\alpha_{1}u:\cdots:\alpha_{n-2}u:u]$ where $u$ is a local parameter of $A_{1}$ such that $a_{1}$ is given by $\{u=0\}$ and $D(2)$ is given by $\{[X_{1},X_{2},\cdots,X_{n}] : X_{n}=0\}$. $\tilde{f}|_{E}:E\longrightarrow\mathbb{P}^{n}_{3}$ is given by $[x:0:\cdots:0:z]$ where $D(3)$ is given by $\{[X_{1},X_{2},\cdots,X_{n}] : X_{n}=0\}$.
             \end{itemize}
   \end{enumerate} 
\end{proof}

Note that the case where essential part is singular curves can be stated and proved in the same way.
Note that we actually know every element of the fiber explicitly. By similar way we can prove the following lemmas whose proof will be omitted. \\

\lemma{} Let $(C,f:C\longrightarrow \mathbb{P}^{n})$ be an element of the main component of the Kontsevich's moduli space of stable maps satisfying the following conditions.
$C$ is of the type $E(0)[C_{1}(1),C_{2}(2)]$.
\begin{enumerate}
  \item if $f$ has ramification order $1$ at $c_{2}$ and there is no smooth point $q_{2}\in C_{2}$ such that $f(c_{2})=f(q_{2})$,
then the fiber of $\phi$ is equal to a point set theoretically.
  \item if $f$ has ramification order $1$ at $c_{2}$ and there is a smooth point $q_{2}\in C_{2}$ such that $f(c_{2})=f(q_{2})$, then the fiber of $\phi$ is equal to $\mathbb{P}^{n}$ set theoretically.
  \item if $f$ has ramification order $2$ at $c_{2}$.and images of $C_{1}$ and $C_{2}$ are different lines, then fiber of $\phi$ is equal to $\mathbb{P}^{1}$ set theoretically.
 \item if $f$ has ramification order $2$ at $c_{2}$, images of $C_{1}$ and $C_{2}$ are same lines, then the fiber of $\phi$ is equal to $\mathbb{P}^{n-1}\bigcup\mathbb{P}^{1}$ glued at one point, set theoretically.
\end{enumerate}\

\lemma{} Let $(C,f:C\longrightarrow \mathbb{P}^{n})$ be an element of the main component of the Kontsevich's moduli space of stable maps satisfying the following conditions.
$C$ is of the type $E(0)[C_{1}(1),C_{2}(1),C_{3}(1)]$.
\begin{enumerate}
  \item if images of $C_{1}$ and $C_{2}$ and $C_{3}$ are distinct lines, then the fiber of $\phi$ is equal to a point set theoretically.
  \item if images of $C_{1}$ and $C_{2}$ are same lines and the image of $C_{3}$ is the distinct line, then the fiber of $\phi$ is equal to a point set theoretically.
  \item if images of $C_{1}$ and $C_{2}$ and $C_{3}$ are all same lines, then the fiber of $\phi$ is equal to $\mathbb{P}^{1}$ set theoretically.
\end{enumerate}\

\lemma{} Let $(C,f:C\longrightarrow \mathbb{P}^{n})$ be an element of the main component of the Kontsevich's moduli space of stable maps satisfying the following conditions.
$C$ is of the type $E(0)[B_{1}(0)[C_{1}(1),C_{2}(2)]]$.
\begin{enumerate}
  \item if $f$ has ramification order $1$ at $c_{2}$ and there is no smooth point $q_{2}\in C_{2}$ such that $f(c_{2})=f(q_{2})$, then the fiber of $\phi$ is equal to a point, set theoretically.
  \item if $f$ has ramification order $1$ at $c_{2}$ and there is a smooth point $q_{2}\in C_{2}$ such that $f(c_{2})=f(q_{2})$, then the fiber of $\phi$ is equal to $\mathbb{P}^{n-1}\bigcup\mathbb{P}^{n-1}$ glued along $\mathbb{P}^{n-2}$, set theoretically.
  \item if $f$ has ramification order $2$ at $c_{2}$ and the tangent lines of images of $C_{1}$ and $C_{2}$ are independent, then the fiber of $\phi$ is equal to $\mathbb{P}^{1}$, set theoretically.
  \item if the tangent lines of images of $C_{1}$ and $C_{2}$ are dependent, then the fiber of $\phi$ is equal to $Bl_{pt}\mathbb{P}^{n}\bigcup (\mathbb{P}^{1}\times\mathbb{P}^{n-1}) \bigcup Bl_{pt}\mathbb{P}^{n}$ glued along $\mathbb{P}^{n-1}$, $\mathbb{P}^{n-2}$, set theoretically.
\end{enumerate}\

\lemma{} Let $(C,f:C\longrightarrow \mathbb{P}^{n})$ be an element of the main component of the Kontsevich's moduli space of stable maps satisfying the following conditions.
$C$ is of the type $E(0)[B_{1}(0)[C_{1}(1),C_{2}(1),C_{3}(1)]]$.
\begin{enumerate}
  \item if the images of $C_{1}$ and $C_{2}$ and $C_{3}$ are distinct lines, then the fiber of $\phi$ is equal to a point set theoretically.
  \item if the images of $C_{1}$ and $C_{2}$ are same line and the image of $C_{3}$ is distinct line, then the fiber of $\phi$ is equal to a point set theoretically.
  \item if the images of $C_{1}$ and $C_{2}$ and $C_{3}$ are all same lines, then the fiber of $\phi$ is equal to $(\mathbb{P}^{1}\times\mathbb{P}^{n-1})\bigcup Bl_{pt}\mathbb{P}^{n}$ glued along $\mathbb{P}^{n-1}$set theoretically.
\end{enumerate}\

\lemma{} Let $(C,f:C\longrightarrow \mathbb{P}^{n})$ be an element of the main component of the Kontsevich's moduli space of stable maps satisfying the following conditions.
$C$ is of the type $E(0)[B_{1}(0)[C_{1}(1),C_{2}(1)],C_{3}(1)]]$.
\begin{enumerate}
  \item if the images of $C_{1}$ and $C_{2}$ are distinct lines, then fiber of $\phi$ is equal to a point, set theoretically.
  \item if the images of $C_{1}$ and $C_{2}$ are same lines and the image of $C_{3}$ is distinct line, then the fiber of $\phi$ is equal to $\mathbb{P}^{n-1}$, set theoretically.
  \item if the images of $C_{1}$ and $C_{2}$ and $C_{3}$ are all same lines, then the fiber of $\phi$ is equal to $(\mathbb{P}^{1}\times\mathbb{P}^{n-1})\bigcup\mathbb{P}^{n-1}$ glued along $\mathbb{P}^{n-2}$, set theoretically.
\end{enumerate}\

\lemma{} Let $(C,f:C\longrightarrow \mathbb{P}^{n})$ be an element of the main component of the Kontsevich's moduli space of stable maps satisfying the following conditions.
$C$ is of the type $E(0)[B_{1}(0)[B_{2}(0)[C_{1}(1),C_{2}(1)],C_{3}(1)]]$.
\begin{enumerate}
  \item if the images of $C_{1}$ and $C_{2}$ are distinct lines, then the fiber of $\phi$ is equal to point, set theoretically.
  \item if the images of $C_{1}$ and $C_{2}$ are same lines and the image of $C_{3}$ is the distinct line, then the fiber of $\phi$ is equal to $\mathbb{P}^{n-1}$, set theoretically.
  \item if the images of $C_{1}$ and $C_{2}$ and $C_{3}$ are all same lines, then the fiber of $\phi$ is equal to $ Bl_{pt}\mathbb{P}^{n}\bigcup Bl_{pt}(\mathbb{P}^{n-1}\times\mathbb{P}^{1})\bigcup (\mathbb{P}^{n-1}\times\mathbb{P}^{1})\bigcup(\mathbb{P}^{n-1}\times\mathbb{P}^{1})$, set theoretically.
  
 \end{enumerate}\

\remark What we showed is that the fiber of $\phi$ is ,at least set theoretically, same as the fiber of corresponding blow-ups which we will describe later. Actually it is same scheme theoretically.\\
\\
\\
\\
\section{The case of the degree $2$}
\bigskip

In this section, we show that when $d=2$, two moduli spaces are same. i.e. $\mathbf{\widetilde{M}}_{1,0}(\mathbb{P}^{n},2)_{0}$ =  $\overline{\mathbf{M}}_{1,0}^{log,ch}(\mathbb{P}^{n},2)$.
Note that if the degree is $2$, $\mathbf{\widetilde{M}}_{1,0}(\mathbb{P}^{n},2)_{0} = \mathbf{\overline{M}}_{1,0}(\mathbb{P}^{n},2)_{0}$.
As in the previous section, we can calculate the fiber of $\phi:\overline{\mathbf{M}}_{1,0}^{ch}(\mathbb{P}^{n},2)\longrightarrow\mathbf{\widetilde{M}}_{1,0}(\mathbb{P}^{n},2)_{0}$ and it is easy to see that every fiber is just one point. This actually suffices to conclude that $\mathbf{\widetilde{M}}_{1,0}(\mathbb{P}^{n},2)_{0}$ =  $\overline{\mathbf{M}}_{1,0}^{log,ch}(\mathbb{P}^{n},2)$ by the Zariski's main theorem. Still we construct an actual morphism for the completeness. We only do the case $n=1$ for simplicity.

Note that when the essential part is not contracted to a point, two moduli spaces are naturally isomorphic. So we only need to consider neighborhoods of points where the essential part is contracted to point.

We describe an etale atlas of stack $\mathbf{\overline{M}}_{1,0}(\mathbb{P}^{1},2)_{0}$. Because of stackyness of the moduli space of elliptic curves, we need to separate the case according to j-invariant of the essential part of the domain curve.
\subsubsection{when essential part is smooth elliptic curve with j $\neq0$}
Let $k$ be an algebraically closed field and $t$, $\alpha$, $\gamma$, $c$, $A$ be indeterminates.
Let $R=k[t,\alpha,\gamma,c,A]/(\gamma-\alpha^{3}-\gamma^{2}\alpha-A\gamma^{3})$.
Let $D_{1}$,$D_{2}$ be subschemes defined by ideals $(\alpha,\gamma)$,$(t)$.
Let $S=Spec(R)$\textbackslash$V$ where $V\subset Spec(R)$ is a subscheme defined by an ideal $(4+27A^{2})$ and $C'=Proj(R[x,y,z]/zy^{2}-x^{3}-z^{2}x-Az^{3})$.\\

Then the rational map $f':C'\dashrightarrow \mathbb{P}^{1}$ defined by $[t\gamma(x+\alpha y)+c(\gamma x-\alpha z),\gamma x-\alpha z]$ gives the family of elliptic stable maps except at $D_{1}$ and $D_{2}$.
But if we let $C$ be the blow up of $C'$ along $(D_{1},x,z)$,$(D_{2},x-\alpha y,z-\gamma y)$,$(D_{2},x,z)$, we easily see that $f':C'\dashrightarrow \mathbb{P}^{1}$ extends to $f:C\longrightarrow \mathbb{P}^{1}$ and $f$ gives a family of elliptic stable maps over whole $S$.\\

Moreover we know every element of family over $S$ explicitly as follows. \\
Over \{$\gamma=0,t\neq0$\}, the domain curves are of the type $E[C_{1}]$ and $f|_{C_{1}}:C_{1}\rightarrow\mathbb{P}^{1}$ is given by $[ts^{2}+c(s-1):s-1]$, where $s$ is a local parameter of $C_{1}$ such that $c_{1}$ is given by $\{s=0\}$.\\
Over \{$t=0,\gamma\neq0$\}, the domain curves are of the type $E[C_{1},C_{2}]$ and $c_{1}$ and $c_{2}$ are given by $(z=0)$, $(x=\alpha y,z=\gamma y)$ in $E=\{[x;y;z]:zy^{2}=x^{3}+z^{2}x+Az^{3}\}$ and $f|_{C_{i}}:C_{i}\rightarrow\mathbb{P}^{1}$ is given by $[s_{i}+c,1]$ where $c_{i}$ is a local parameter of $C_{i}$ such that $c_{i}$ is given by $(s_{i}=0)$ for $i=1, 2$.\\
Over \{$\gamma=0,t=0$\}, the domain curves are of the type $E[B_{1}[C_{1},C_{2}]]$ and $f|_{C_{i}}:C_{i}\rightarrow\mathbb{P}^{1}$ is given by $[s_{i}+c,1]$ where $s_{i}$ local parameter of $C_{i}$ such that $c_{i}$ is given by $(s_{i}=0)$ for $i=1, 2$.\\
By looking at a local deformation, we can check that $S$ is an etale atlas of $\mathbf{\overline{M}}_{1,0}(\mathbb{P}^{1},2)_{0}$.

 \begin{proposition}
 Let us assume above. If we let $W$ be the blow up of $S\times \mathbb{P}^{1}$ along $(D_{2},x_{0}-cx_{1})$, $(D_{1}^{2},x_{0}-cx_{1})$ where $x_{0}$, $x_{1}$ are coordinates of $\mathbb{P}^{1}$, then $f:C\longrightarrow \mathbb{P}^{1}$ extends to $\widetilde{f}:C\longrightarrow W$ and $\widetilde{f}$ gives a family of elliptic admissible maps over $S$.
 \end{proposition}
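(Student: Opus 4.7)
The plan is to follow exactly the strategy of Proposition 3.0.1. The morphism $f:C\to\mathbb{P}^{1}$ together with the structure map $C\to S$ defines a morphism $C\to S\times\mathbb{P}^{1}$. By the universal property of blowing up, this morphism lifts uniquely to $\tilde{f}:C\to W$ provided the pullback to $C$ of each of the two ideals $(t,\,x_{0}-cx_{1})$ and $((\alpha,\gamma)^{2},\,x_{0}-cx_{1})$ is invertible. This is a chart-by-chart computation: since $C$ itself is obtained from $C'$ by the explicit sequence of blow-ups along $(D_{1},x,z)$, $(D_{2},x-\alpha y,z-\gamma y)$, $(D_{2},x,z)$, one works through each affine chart of $C$ and substitutes the formulas $x_{0}-cx_{1}=t\gamma(x+\alpha y)$ and $x_{1}=\gamma x-\alpha z$. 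After dividing by the appropriate powers of the local generators of the exceptional divisors, one checks that in each chart one of the listed generators of each ideal becomes a unit multiple of the divisorial generator, so the pullback ideals are principal.

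For the second step one verifies the admissibility condition. The relative singular locus $(W/S)^{\mathrm{sing}}$ consists of the two exceptional loci of the two blow-ups, lying over $D_{2}$ and $D_{1}$ respectively. The verification splits according to the trichotomy already listed in the setup preceding the proposition: over $\{\gamma=0,\,t\neq 0\}$ the domain $E[C_{1}]$ has a single node $c_{1}$ mapped into the exceptional locus over $D_{1}$; over $\{t=0,\,\gamma\neq 0\}$ the domain $E[C_{1},C_{2}]$ has two nodes $c_{1},c_{2}$ mapped into the exceptional locus over $D_{2}$; over the origin $\{t=\gamma=0\}$ the domain $E[B_{1}[C_{1},C_{2}]]$ has three nodes to analyze simultaneously. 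In each case one writes down the factorization diagram of the admissibility definition, using the local parameters of $C$ coming from the blow-ups of $C'$ above and the local parameters of $W$ coming from the blow-ups of $S\times\mathbb{P}^{1}$ below, and identifies the integer $l$ with the order of tangency of $\tilde{f}$ at the corresponding node.

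The main obstacle is the behavior over the origin $\{t=\gamma=0\}$, where both blow-ups interact and the target $W_{\bar{s}}$ becomes a chain with two bubble components. The key point is that the exponent $2$ in the ideal $(D_{1}^{2},\,x_{0}-cx_{1})$ is precisely what is needed so that the node joining $E$ to the bubble contracted from the degree-zero component $B_{1}$ satisfies the admissibility diagram with $l=2$; this matches the fact that the two rational tails $C_{1},C_{2}$ send their nodes $b_{1},b_{2}\in B_{1}$ to the same point of the target with first-order contact, so the local smoothing parameter of this node on $W$ is the square of the one on $C$. At the nodes attached to $C_{1},C_{2}$ one obtains $l=1$, reflecting transverse images. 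Once these local coordinates are aligned, admissibility is a direct consequence of the explicit parametrizations of $f|_{E}$ and $f|_{C_{i}}$ recorded before the statement. The computation on the remaining open sets of $S$, where the domain curve is smooth and $W$ is trivial, is formal and handled as in Proposition 3.0.1.
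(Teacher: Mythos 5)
The paper gives no proof of this proposition at all, implicitly deferring to the method of Proposition 3.0.1, and your proposal reproduces exactly that method: lift $C\to S\times\mathbb{P}^{1}$ through the blow-ups via the universal property by checking chart-by-chart that the pulled-back ideals $(t,x_{0}-cx_{1})$ and $((\alpha,\gamma)^{2},x_{0}-cx_{1})$ become invertible, then verify admissibility stratum by stratum using the explicit local parametrizations of $f|_{E}$ and $f|_{C_{i}}$. This is correct and is essentially the paper's approach; the only quibble is that the exponent $2$ in $(D_{1}^{2},x_{0}-cx_{1})$ is already forced over the generic point of $D_{1}$, where $f|_{C_{1}}=[ts^{2}+c(s-1):s-1]$ meets $\{x_{0}=cx_{1}\}$ doubly at $c_{1}$, rather than only at the node over the origin.
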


\remark We know every element over $S$ explicitly. Here we only describe the type of domain curves and target. Over $\{t\neq0,\gamma\neq=0\}$, the domain curve $C$ is of type $E$  and the target $W=\mathbb{P}^{1}$.
Over $\{t=0,\gamma\neq 0\}$, the domain curve $C$ is of the type $E[C_{1},C_{2}]$ and the target $W=\mathbb{P}^{1}(1)$.
Over $\{t\neq 0,\gamma =0\}$, the domain curve $C$ is of the type $E[C_{1}]$ and the target $W=\mathbb{P}^{1}(1)$.  
Over $\{t=0,\gamma =0\}$, domain $C=E[B_{1}[C_{1},C_{2}]]$ and the target $W=\mathbb{P}^{1}(2)$.

\subsubsection{when essential part is smooth elliptic curve with j $\neq1728$} Everything is same if we change the equation $\gamma-\alpha^{3}-\gamma^{2}\alpha-A\gamma^{3}$ above to $\gamma-\alpha^{3}-A\gamma^{2}\alpha-\gamma^{3}$.

\subsubsection{when essential part is singular curve}
Let $k$ be an algebraically closed field and $t$, $\alpha$, $\beta$, $c$, $A$ be indeterminates.
Let $R=k[t,\alpha,\beta,c,A]/(\beta^{2}-\alpha^{3}-\alpha^{2}-A)$.
Let $D_{2}$ be a subscheme defined by an ideal $(t)$.
Let $S=Spec(R)$ and $C'=Proj(R[x,y,z]/(zy^{2}-x^{3}-zx^{2}-Az^{3}))$.

Then a rational map $f':C'\dashrightarrow \mathbb{P}^{1}$ defined by $[t(y+\beta z)+c(x-\alpha z),x-\alpha z]$ gives a family of elliptic stable maps except at $D_{2}$ and $\{\alpha=\beta=0\}$.
But if we let $C$ be the blow up of $C'$ along ideals $(y-x-\frac{x^{2}+\alpha x+\alpha^{2}}{2},\beta-\alpha-\frac{x^{2}+\alpha x+\alpha^{2}}{2})$, $(D_{2},x,z)$, $(D_{2},x-\alpha z,y-\beta z)$, we can see that $f':C'\dashrightarrow\mathbb{P}^{1}$ extends to $f:C\longrightarrow\mathbb{P}^{1}$ and $f$ gives a family of elliptic stable maps over whole $S$. Note that the effect of blowing up along ideal $(y-x-\frac{x^{2}+\alpha x+\alpha^{2}}{2},\beta-\alpha-\frac{x^{2}+\alpha x+\alpha^{2}}{2})$ is inserting a rational component at singular point in the rational nodal curve at $\{\alpha=\beta=0\}$.

 \begin{proposition}
 Let us assume above. If we let $W$ be the blow up of $S\times \mathbb{P}^{1}$ along ideal $(D_{2},x_{0}-cx_{1})$ where $x_{0}$,$x_{1}$ are coordinates of $\mathbb{P}^{1}$, then $f:C\longrightarrow\mathbb{P}^{1}$ extends to $\widetilde{f}:C\longrightarrow W$ and $\widetilde{f}$ gives a family of elliptic admissible maps over $S$.
 \end{proposition}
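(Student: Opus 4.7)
The plan is to mirror the strategy of Proposition 1 in Section 3. Extending $f$ to a morphism $\widetilde{f}\colon C\longrightarrow W$ is the easy half: by the universal property of blow-ups, such an extension exists if and only if the pullback of the ideal $(D_{2}, x_{0}-cx_{1})$ under the graph morphism $C\longrightarrow S\times\mathbb{P}^{1}$ is an invertible sheaf on $C$. A direct computation from the formula defining $f'$ gives
\[
x_{0}-cx_{1}\;=\;\bigl(t(y+\beta z)+c(x-\alpha z)\bigr)-c(x-\alpha z)\;=\;t(y+\beta z),
\]
so the pullback ideal is $(t,\,t(y+\beta z))=(t)$, which is principal. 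This immediately yields $\widetilde{f}$.

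The substance of the proposition is the admissibility condition from Definition 2.3.3. The relative singular locus $(W/S)^{\mathrm{sing}}$ lies over the divisor $\{t=0\}$, and in a chart of $W$ coming from the blow-up a node of the fibre has the local form $xy=t$, with $x$ the equation of the exceptional divisor and $y$ the equation of the proper transform of $S\times\mathbb{P}^{1}|_{t=0}$. For each point $p\in C$ whose image lies in this singular locus one must exhibit \'etale-local coordinates on $C$ of the form $uv=\tau$ with $\tau\in\mathfrak{m}_{A}$ such that $\widetilde{f}^{\ast}x=u^{l}$ and $\widetilde{f}^{\ast}y=v^{l}$ for a common integer $l\geq 1$, to be read off the explicit formulae.

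I would carry this out stratum by stratum on $S$, using the explicit description of $C$ produced by the three successive blow-ups of $C'$. Away from $\{\alpha=\beta=0\}$ the essential part is a smooth elliptic curve, the first blow-up is trivial on a neighborhood, and only the nodes created by the subsequent blow-ups $(D_{2},x,z)$ and $(D_{2},x-\alpha z,y-\beta z)$ need to be inspected; in local parameters on each rational tail, the formulae for $\widetilde{f}$ inherited from $[t(y+\beta z)+c(x-\alpha z),\,x-\alpha z]$ show that the relevant coordinate on $W$ pulls back to a monomial of the expected degree, and the integer $l$ is determined directly. Over the deepest stratum $\{t=\alpha=\beta=0\}$ the first blow-up inserts a $\mathbb{P}^{1}$-bridge $B_{1}$ at the node of the essential part; there $C$ acquires additional nodes where $B_{1}$ meets $C_{1}$ and $C_{2}$, and admissibility has to be verified at each of them as well, using the same local analysis.

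The principal obstacle is this stratum-by-stratum bookkeeping: one has to choose affine charts on $C$ adapted to the three source-side blow-ups, match them against the two standard charts on $W$ coming from the blow-up along $(D_{2},x_{0}-cx_{1})$, and in each case identify local nodal equations on source and target so that the pulled-back equation $xy=t$ factors through $uv=\tau$ with matching exponents on both generators. The calculation at any single node is routine once the charts are written down, but keeping the data coherent across the three source-side blow-ups and their interaction with the nodal essential part is where the technical weight of the proof sits.
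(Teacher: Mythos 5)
Your proposal is correct and matches the method the paper itself uses: the paper states this proposition without proof, but its proof of the analogous Proposition in Section 3 proceeds exactly as you describe, invoking the universal property of blow-ups to show the pulled-back ideal $(D_{2},x_{0}-cx_{1})$ becomes invertible on $C$ and then checking admissibility at the distinguished nodes in explicit local charts. The only caution is that the invertibility computation must be carried out on the charts of $C$ (after the three source-side blow-ups), not on $C'$ where $f'$ is only rational -- but you acknowledge this in your stratum-by-stratum plan, and the conclusion is unaffected.
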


\subsubsection{}
Summing up previous results we get a morphism from $\mathbf{\widetilde{M}}_{1,0}(\mathbb{P}^{1},2)_{0}$ to $\overline{\mathbf{M}}_{1,0}^{ch}(\mathbb{P}^{1},2)$ where $\overline{\mathbf{M}}_{1,0}^{ch}(\mathbb{P}^{1},2)$ is the moduli space of elliptic admissible stable maps without log structures. Actually it is one to one morphism. On the other hand we also have one to one morphism from $\overline{\mathbf{M}}_{1,0}^{ch,log}(\mathbb{P}^{1},2)$ to $\overline{\mathbf{M}}_{1,0}^{ch}(\mathbb{P}^{1},2)$, which is just forgetting log structures(\cite{Kim}). By the uniqueness of the normalization, we see that $\mathbf{\widetilde{M}}_{1,0}(\mathbb{P}^{1},2)_{0} = \overline{\mathbf{M}}_{1,0}^{ch,log}(\mathbb{P}^{1},2)$.\

\bigskip
\bigskip
\bigskip
\section{The case of the degree $3$}
\bigskip
In this section we describe a local chart of $\mathbf{\overline{M}}_{1,0}(\mathbb{P}^{n},3)_{0}$ and $\overline{\mathbf{M}}_{1,0}^{log,ch}(\mathbb{P}^{n},3)$. Throughout the section, we will denote the proper transforms of subscheme as the same notations as original subschemes.\\

 Because of the stackyness of the moduli space of elliptic curves, we need to separate the case according to j-invariant of the essential part of the domain curve.
\subsection{Etale chart of $\mathbf{\overline{M}}_{1,0}(\mathbb{P}^{n},3)_{0}$}

(a)when essential part is smooth elliptic curve with j $\neq0$.\\
Let $k$ be an algebraically closed field and $a_{1}$, $a_{2}$, $\cdots$, $a_{n-1}$, $b_{1}$, $b_{2}$, $\cdots$, $b_{n-1}$, $c_{1}$, $c_{2}$, $\cdots$, $c_{n-1}$, $d_{1}$, $d_{2}$, $\cdots$, $d_{n}$, $z_{1}$, $z_{2}$, $A$, $\alpha$, $\gamma$, $\alpha'$, $\gamma'$ be indeterminates.
Let $R$ = k[$a_{1}$,$a_{2}$,$\cdots$,$a_{n-1}$,$b_{1}$,$b_{2}$,$\cdots$,$b_{n-1}$,$c_{1}$,$c_{2}$,$\cdots$,$c_{n-1}$,$d_{1}$,$d_{2}$,$\cdots$,$d_{n}$,$z_{1}$,$z_{2}$,$A$,$\alpha$,$\gamma$,$\alpha'$,$\gamma'$]/($a_{1}z_{1}-b_{1}z_{2}$,$a_{2}z_{1}-b_{2}z_{2}$,$\cdots$,$a_{n-1}z_{1}-b_{n-1}z_{2}$,$\gamma-\alpha^{3}-\gamma^{2}\alpha-A\gamma^{3}$,$\gamma'-\alpha'^{3}-\gamma'^{2}\alpha'-A\gamma'^{3}$).\

Let $D_{2,\alpha}$, $D_{2,\alpha'}$, $D_{2,\alpha-\alpha'}$, $D_{3}$, $F_{\alpha}$, $F_{\alpha'}$, $F_{\alpha-\alpha'}$, $G$ be subschemes defined by ideals $(\alpha,\gamma)$,
$(\alpha',\gamma')$, $(\alpha-\alpha',\gamma-\gamma')$, $(z_{1},z_{2})$, $(z_{1},b_{1},b_{2},\cdots,b_{n-1})$, $(z_{2},a_{1},a_{2},\cdots,a_{n-1})$, $(z_{1}-z_{2},a_{1}-b_{1},a_{2}-b_{2},\cdots,a_{n-1}-b_{n-1})$, $(\frac{\alpha'\gamma \frac{z_{1}}{z_{2}}(x+\alpha y)(\gamma'x-\alpha'z)-\alpha\gamma'(\gamma x-\alpha z)(x+\alpha'y)}{\alpha \alpha'(\alpha-\alpha')(\gamma x-\alpha z)(\gamma'x-\alpha'z)})$.

Let $\widehat{S}=Spec(R)$ \textbackslash $V$, $S=Bl_{(\alpha,\alpha',\gamma,\gamma')}\widehat{S}$, where $V$ is the subscheme defined by an ideal $(4+27A^{2})(\alpha-\alpha',\gamma-\gamma')$. Let $D_{1}$ be the exceptional divisor.
Let $\widehat{C}=Proj(R[x,y,z]/zy^{2}-x^{3}-z^{2}x-Az^{3})$, $C'$ be a pull-back of $\widehat{C}$, and $C$ be the blow up of $C'$ along ideals \\

$(D_{1},x,z)$,

$(D_{2,\alpha},x-\alpha'y,z-\gamma'y)$,$(D_{2,\alpha},x,z)$,$(D_{2,\alpha'},x-\alpha y,z-\gamma y)$,$(D_{2,\alpha'},x,z)$,

$(D_{3},x,z)$,$(D_{3},x-\alpha y,z-\gamma y)$,$(D_{3},x-\alpha'y,z-\gamma'y)$,

$(F_{\alpha},x-\alpha y,z-\gamma y)$,$(F_{\alpha'},x-\alpha'y,z-\gamma'y)$,$(F_{\alpha-\alpha'},x,z)$.\\

Then rational map $\widehat{f}:\widehat{C}\dashrightarrow \mathbb{P}^{n}$ given by\\

$[\alpha'\gamma(a_{1}+c_{1})z_{1}(x+\alpha y)(\gamma'x-\alpha'z)-\alpha\gamma'(b_{1}+c_{1})z_{2}(x+\alpha'y)(\gamma x-\alpha z)+d_{1}(\alpha-\alpha')(\gamma x-\alpha z)(\gamma'z-\alpha'x),
   \alpha'\gamma(a_{2}+c_{2})z_{1}(x+\alpha y)(\gamma'x-\alpha'z)-\alpha\gamma'(b_{2}+c_{2})z_{2}(x+\alpha'y)(\gamma x-\alpha z)+d_{2}(\alpha-\alpha')(\gamma x-\alpha z)(\gamma'z-\alpha'x),
   \cdots,
   \alpha'\gamma(a_{n-1}+c_{n-1})z_{1}(x+\alpha y)(\gamma'x-\alpha'z)-\alpha\gamma'(b_{n-1}+c_{n-1})z_{2}(x+\alpha'y)(\gamma x-\alpha z)+d_{n-1}(\alpha-\alpha')(\gamma x-\alpha z)(\gamma'z-\alpha'x),
   \alpha'\gamma z_{1}(x+\alpha y)(\gamma'x-\alpha'z)-\alpha\gamma'z_{2}(x+\alpha'y)(\gamma x-\alpha z)+d_{n}(\alpha-\alpha')(\gamma x-\alpha z)(\gamma'z-\alpha'x),
   (\alpha-\alpha')(\gamma x-\alpha x)(\gamma'x-\alpha'z)]$\\

extends to a morphism $f:C\longrightarrow\mathbb{P}^{n}$.
This gives a family of semi-stable maps of elliptic curves and after the stabilization we get a family of stable maps of elliptic curves over $S$.\\

Note that as in the case of degree 2, we can actually describe every element parametrized by $S$ and check it is an etale atlas. \\

(b)when essential part is smooth elliptic curve with j $\neq1728$.\\ Everything is same if we change the equation $\gamma-\alpha^{3}-\gamma^{2}\alpha-A\gamma^{3}$ above to $\gamma-\alpha^{3}-A\gamma^{2}\alpha-\gamma^{3}$.\\

(c)when essential part is singular curve.\\
Let $k$ be an algebraically closed field and  $a_{1}$, $a_{2}$, $\cdots$, $a_{n-1}$, $b_{1}$, $b_{2}$, $\cdots$, $b_{n-1}$, $c_{1}$, $c_{2}$, $\cdots$, $c_{n-1}$, $d_{1}$, $d_{2}$, $\cdots$, $d_{n}$, $z_{1}$, $z_{2}$, $A$, $\alpha$, $\beta$, $\alpha'$, $\beta'$ be indeterminates.
Let  $R$= k[$a_{1}$,$a_{2}$,$\cdots$,$a_{n-1}$,$b_{1}$,$b_{2}$,$\cdots$,$b_{n-1}$,$c_{1}$,$c_{2}$,$\cdots$,$c_{n-1}$,$d_{1}$,$d_{2}$,$\cdots$,$d_{n}$,$z_{1}$,$z_{2}$,$A$,$\alpha$,$\beta$,$\alpha'$,$\beta'$]/($a_{1}z_{1}-b_{1}z_{2}$,$a_{2}z_{1}-b_{2}z_{2}$,$\cdots$,$a_{n-1}z_{1}-b_{n-1}z_{2}$,$\beta^{2}-\alpha^{3}-\alpha-A$,$\beta'-\alpha'^{3}-\alpha'-A$)\

Let $D_{2}$, $D_{3}$, $F_{\alpha}$, $F_{\alpha'}$, $F_{\alpha-\alpha'}$, $G$ be ideals defined by
$(\alpha-\alpha',\beta-\beta')$,
$(z_{1},z_{2})$,\\
$(z_{1},b_{1},b_{2},\cdots,b_{n-1})$,
$(z_{2},a_{1},a_{2},\cdots,a_{n-1})$,
$(z_{1}-z_{2},a_{1}-b_{1},a_{2}-b_{2},\cdots,a_{n-1}-b_{n-1})$,
$(\frac{z_{1}}{z_{2}}(y+\beta)(x-\alpha')-(y+\beta')(x-\alpha))$.\\

Let $\widehat{S}=Spec(R)$, $S = Bl_{(\beta-\alpha-\frac{\alpha^{2}+\alpha\alpha'+\alpha'^{2}}{2},\beta'-\alpha'-\frac{\alpha^{2}+\alpha\alpha'+\alpha'^{2}}{2})}\widehat{S}$.\\
Let $\widehat{C}=Proj(R[x,y,z]/zy^{2}-x^{3}-x^{2}z-Az^{3})$, let $C'$ be a pull back of $\widehat{C}$, and $C$ be the blow up of $C'$ along ideals\\

$(y-x-\frac{x^{2}+\alpha x+\alpha^{2}}{2},\beta-\alpha-\frac{x^{2}+\alpha x+\alpha^{2}}{2})$,$(y-x-\frac{x^{2}+\alpha'x+\alpha'^{2}}{2},\beta'-\alpha'-\frac{x^{2}+\alpha'x+\alpha'^{2}}{2})$,

$(D_{2},x-\alpha z,y-\beta z)$,$(D_{2},x,z)$,

$(D_{3},x,z)$,$(D_{3},x-\alpha z,y-\beta z)$,$(D_{3},x-\alpha'z,y-\beta')$

$(F_{\alpha},x-\alpha z,y-\beta z)$,$(F_{\alpha'},x-\alpha'z,y-\beta'z)$,$(F_{\alpha-\alpha'},x,z)$.\\

Then rational map $\widehat{f}:\widehat{C}\dashrightarrow \mathbb{P}^{n}$ given by \\

$[(\alpha-\alpha')(a_{1}+c_{1})z_{1}(y+\beta z)(x-\alpha' z)-(\alpha-\alpha')(b_{1}+c_{1})z_{2}(y+\beta'z)(x-\alpha z)+d_{1}(\beta-\beta')(x-\alpha z)(x-\alpha'z),
   (\alpha-\alpha')(a_{2}+c_{2})z_{1}(y+\beta z)(x-\alpha' z)-(\alpha-\alpha')(b_{2}+c_{2})z_{2}(y+\beta'z)(x-\alpha z)+d_{1}(\beta-\beta')(x-\alpha z)(x-\alpha'z),
   \cdots,
   (\alpha-\alpha')(a_{n-1}+c_{n-1})z_{1}(y+\beta z)(x-\alpha' z)-(\alpha-\alpha')(b_{n-1}+c_{n-1})z_{2}(y+\beta'z)(x-\alpha z)+d_{n-1}(\beta-\beta')(x-\alpha z)(x-\alpha'z),
   (\alpha-\alpha')z_{1}(y+\beta z)(x-\alpha' z)-(\alpha-\alpha')z_{2}(y+\beta'z)(x-\alpha z)+d_{n}(\beta-\beta')(x-\alpha z)(x-\alpha'z),
   (\beta-\beta')(x-\alpha z)(x-\alpha'z)]$\\

 extends to a morphism $f:C\longrightarrow\mathbb{P}^{n}$.
This gives the family of semi-stable maps of elliptic curves and after the stabilization we get a family of stable map of elliptic curves over $S$.\\

\subsection{Local chart of $\overline{\mathbf{M}}_{1,0}^{log,ch}(\mathbb{P}^{n},3)$}
(a)when essential part is smooth elliptic curve with j $\neq0$.\\
In previous local chart $S$ of $\mathbf{\overline{M}}_{1,0}(\mathbb{P}^{2},3)_{0}$, the blow-up center of Vakil-Zinger desingularization is given by $D_{3}$. And $\sum_{1}$, $\sum_{2}$, $\Gamma_{1}$, $\Gamma_{2}$ are given by proper transforms of \\

$(D_{1},a_{1},a_{2},\cdots,a_{n-1},b_{1},b_{2},\cdots,b_{n-1}, \alpha z_{1}+\alpha'z_{2})$,

$(D_{2,\alpha},z_{1},b_{1},b_{2},\cdots,b_{n-1})(D_{2,\alpha'},z_{2},a_{1},a_{2},\cdots,a_{n-1})$,

$(D_{1},a_{1},a_{2},\cdots,a_{n-1},b_{1},b_{2},\cdots,b_{n-1})$,

$(D_{2,\alpha},a_{1},a_{2},\cdots,a_{n-1},b_{1},b_{2},\cdots,b_{n-1})(D_{2,\alpha'},a_{1},a_{2},\cdots,a_{n-1},b_{1},b_{2},\cdots,b_{n-1})$.\\

Let $\widetilde{S}$ be the blow up of S along $D_{3}$, $\sum_{2}$, $\Gamma_{2}$, $\sum_{1}$, $\Gamma_{1}$ and let $E_{1}$, $E_{2,\alpha}\bigcup E_{2,\alpha'}$, $L_{1}$, $L_{2,\alpha}\bigcup L_{2,\alpha'}$ be the exceptional divisors corresponding to $\sum_{1}$, $\sum_{2}$, $\Gamma_{1}$, $\Gamma_{2}$. Note that after blowing up along $\sum_{2}$, $\sum_{1}$ and $\Gamma_{2}$ are separated.
Now let $C^{''}$ be the pull back of $\widehat{C}$ along $\widetilde{S}$ and let $\widetilde{C}$ be the blow up of $C"$ along ideals\\

$(D_{1},x,z)$, $(L_{1},x,z)$,   $(E_{1},x,z)$,

$(D_{2,\alpha},x-\alpha'y,z-\gamma'y)$,$(L_{2,\alpha},x-\alpha'y,z-\gamma'y)$,$(E_{2,\alpha},x-\alpha'y,z-\gamma' y)$,$(D_{2,\alpha},x,z)$,\\
$(L_{2,\alpha},x,z)$,$(E_{2,\alpha},x,z)$,

$(D_{2,\alpha'},x-\alpha y,z-\gamma y)$,$(L_{2,\alpha'},x-\alpha y,z-\gamma y)$,$(E_{2,\alpha'},x-\alpha y,z-\gamma y)$,$(D_{2,\alpha'},x,z)$,\\
$(L_{2,\alpha'},x,z)$,$(E_{2,\alpha'},x,z)$,

$(D_{3},x,z)$,$(D_{3},x-\alpha y,z-\gamma y)$,$(D_{3},x-\alpha'y,z-\gamma'y)$,

$(F_{\alpha},x-\alpha y,z-\gamma y)$,$(F_{\alpha'},x-\alpha'y,z-\gamma'y)$,$(F_{\alpha-\alpha'},x,z)$,

$(\widetilde{L}_{1}^{2},G)$,
$(\widetilde{L}_{2,\alpha},G)$,$(\widetilde{L}_{2,\alpha'},G)$,\\
where $\widetilde{L}_{1}$,$\widetilde{L}_{2,\alpha}$,$\widetilde{L}_{2,\alpha}$ are exceptional divisor of $(L_{1},x,z)$,$(L_{2,\alpha},x,z)$,$(L_{2,\alpha'},x,z)$.\\

Let $\widetilde{W}$ be the blow-up of $\widetilde{S}\times \mathbb{P}^{2}$ along ideals\\

$(D_{3},x_{0}-d_{1}x_{n},x_{1}-d_{2}x_{n},\cdots,x_{n-1}-d_{n}x_{n}$),

$(E_{2}^{2},x_{0}-d_{1}x_{n},x_{1}-d_{2}x_{n},\cdots,x_{n-1}-d_{n}x_{n})$,
$(L_{2},x_{0}-d_{1}x_{n},x_{1}-d_{2}x_{n},\cdots,x_{n-1}-d_{n}x_{n})$,
$(D_{2},x_{0}-d_{1}x_{n},x_{1}-d_{2}x_{n},\cdots,x_{n-1}-d_{n}x_{n})$,

$(E_{1}^{3},x_{0}-d_{1}x_{n},x_{1}-d_{2}x_{n},\cdots,x_{n-1}-d_{n}x_{n})$,
$(L_{1}^{2},x_{0}-d_{1}x_{n},x_{1}-d_{2}x_{n},\cdots,x_{n-1}-d_{n}x_{n})$,
$(D_{1}^{2},x_{0}-d_{1}x_{n},x_{1}-d_{2}x_{n},\cdots,x_{n-1}-d_{n}x_{n})$,\\
where $x_{0}$,$x_{1}$,$\cdots$,$x_{n}$ are coordinates of $\mathbb{P}^{n}$.\\

Then $\widehat{f}:\widehat{C}\dashrightarrow \mathbb{P}^{n}$ extends to $\widetilde{f}:\widetilde{C}\longrightarrow\widetilde{W}$ and we get a family of admissible maps over $\widetilde{S}$.\\

(b)when the essential part is smooth elliptic curve with j $\neq1728$.\\ Everything is same if we change the equation $\gamma-\alpha^{3}-\gamma^{2}\alpha-A\gamma^{3}$ above to $\gamma-\alpha^{3}-A\gamma^{2}\alpha-\gamma^{3}$.\\

(c)when the essential part is singular curve.\\
In previous local chart $S$ of $\mathbf{\overline{M}}_{1,0}(\mathbb{P}^{2},3)_{0}$, the blow up center of Vakil-Zinger desingularization is given by $D_{3}$. And $\sum_{2}$, $\Gamma_{2}$ are given by proper transforms of $(D_{2},z_{1}-z_{2},a_{1}-b_{1})$, $(D_{2},a_{1},b_{1})$.

Let $\widetilde{S}$ be the blow up of $S$ along $D_{3}$, $\sum_{2}$, $\Gamma_{2}$ and let $E_{2}$,  $L_{2}$ be the exceptional divisors corresponding to $\sum_{2}$, $\Gamma_{2}$.
Now let $C''$ be the pull back of $C'$ along $\widetilde{S}$ and let $\widetilde{C}$ be the blow up of $C''$ along ideals\\

$(D_{2},x-\alpha z,y-\beta z)$,$(D_{2},x,z)$,$(L_{2},x-\alpha z,y-\beta z)$,$(L_{2},x,z)$,$(E_{2},x-\alpha z,y-\beta z)$,$(E_{2},x,z)$,

$(D_{3},x,z)$,$(D_{3},x-\alpha z,y-\beta z)$,$(D_{3},x-\alpha'z,y-\beta' z)$,

$(F_{\alpha},x-\alpha z,y-\beta z)$,$(F_{\alpha'},x-\alpha'z,y-\beta'z)$,$(F_{\alpha-\alpha'},x,z)$.

$(\widetilde{L}_{2},G)$,\\
where $\widetilde{L}_{2}$ is exceptional divisor of $(L_{2},x,z)$.\\

Let $\widetilde{W}$ be blow-up of $\widetilde{S}\times \mathbb{P}^{2}$ along ideal\\

$(D_{3},x_{0}-d_{1}x_{n},x_{1}-d_{2}x_{n},\cdots,x_{n-1}-d_{n}x_{n})$,

$(E_{2}^{2},x_{0}-d_{1}x_{n},x_{1}-d_{2}x_{n},\cdots,x_{n-1}-d_{n}x_{n})$,
$(L_{2},x_{0}-d_{1}x_{n},x_{1}-d_{2}x_{n},\cdots,x_{n-1}-d_{n}x_{n})$,
$(D_{2},x_{0}-d_{1}x_{n},x_{1}-d_{2}x_{n},\cdots,x_{n-1}-d_{n}x_{n})$.\\

Then $\widehat{f}:\widehat{C}\dashrightarrow \mathbb{P}^{n}$ extends to $\widetilde{f}:\widetilde{C}\longrightarrow\widetilde{W}$ and we get the family of admissible maps over $\widetilde{S}$.

\bigskip

\subsection{Main result}
 Let $\mathbf{\widehat{M}}$ be the blow up of $\mathbf{\widetilde{M}}_{1,0}(\mathbf{P}^{n},3)_{0}$ along $\sum_{2}$, $\Gamma_{2}$, $\sum_{1}$, $\Gamma_{1}$. By previous subsections, we can find a morphism from $\mathbf{\widehat{M}}$ to $\overline{\mathbf{M}}_{1,0}^{ch}(\mathbb{P}^{n},3)$. Here $\overline{\mathbf{M}}_{1,0}^{ch}(\mathbb{P}^{n},3)$ is moduli space of admissible stable maps of chain type without log structures. One can check that this morphism is finite surjective by using the result of section 5. Actually it is one to one morphism. On the other hand, We also have a finite surjective map from $\overline{\mathbf{M}}_{1,0}^{ch,log}(\mathbb{P}^{n},3)$ to $\overline{\mathbf{M}}_{1,0}^{ch}(\mathbb{P}^{n},3)$, which is just forgetting log structures(\cite{Kim}). By the uniqueness of the normalization, we get following theorem.\\

\noindent 
\textbf{Theorem 1.0.1.}
\textit{$\overline{\mathbf{M}}_{1,0}^{log,ch}(\mathbb{P}^{n},3)$ can be obtained by blowing-up $\mathbf{\widetilde{M}}_{1,0}(\mathbf{P}^{n},3)_{0}$ along the locus $\sum_{2}$, $\Gamma_{2}$, $\sum_{1}$, $\Gamma_{1}$.}

\bigskip

\remark Note that we only used the fact that forgetting morphism $\psi : \overline{\mathbf{M}}_{1,0}^{log,ch}(\mathbb{P}^{n},3)\longrightarrow\overline{\mathbf{M}}_{1,0}^{ch}(\mathbb{P}^{n},3)$ is a finite morphism. It follow from above that it is actually one to one morphism in our cases. We can also get this fact by calculating possible log structures. i.e. when $d\leqslant3$, there exists unique log structure on each admissible stable map. If $d\geqslant4$, there could be more than one log structures on one admissible stable map.\
\
\\
\\
\\

\end{document}